\documentclass[10pt, english, draft]{article}
\usepackage{amssymb,amsmath}
\setlength{\topmargin}{-13mm} \setlength{\oddsidemargin}{-0.3cm}
\textwidth=114mm \textheight=181mm
\title{\bf The number of spanning trees of
power graphs associated with specific groups and some
applications}
\author{{\bf A. R. Moghaddamfar} and {\bf S. Rahbariyan}\\[0.1cm]
{\em Faculty of Mathematics,}\\[0.1cm] {\em K. N. Toosi
University of Technology,}\\[0.1cm]
 {\em P. O. Box $16315$-$1618$, Tehran, Iran}\\[0.1cm]
{\em E-mails}:  {\tt moghadam@kntu.ac.ir} \ {\em and} \  {\tt
moghadam@ipm.ir}\\[0.2cm]
{\bf S. Navid Salehy} and {\bf S. Nima Salehy}\\[0.1cm]
{\em Department of Mathematics, Florida State University,}\\[0.1cm]
{\em Tallahassee, FL $32306$, USA.}\\[0.1cm]
{\em E-mails}:  {\tt navidsalehy@math.fsu.edu}\\ {\em and} \ \
{\tt
nimasalehy@math.fsu.edu}\\[0.2cm]
{\em In memory of Professor Michael Neumann.}}

\newenvironment{proof}{\noindent {\em {Proof}}.}{$\square$
\medskip}

\newtheorem{corollary}{Corollary}[section]

\newtheorem{theorem}{Theorem}[section]
\newtheorem{proposition}{Proposition}[section]
\newtheorem{lm}{Lemma}[section]

\newtheorem{qu}{Question}[section]
\begin{document}
\maketitle
\begin{abstract}
\noindent Given a group $G$, we define the power graph
$\mathcal{P}(G)$ as follows: the vertices are the elements of $G$
and two vertices $x$ and $y$ are joined by an edge if $\langle
x\rangle\subseteq \langle y\rangle$ or $\langle y\rangle\subseteq
\langle x\rangle$. Obviously the power graph of any group is
always connected, because the identity element of the group is
adjacent to all other vertices. In the present paper, among other
results, we will find the number of spanning trees of the power
graph associated with specific finite groups. We also determine,
up to isomorphism, the structure of a finite group $G$ whose power
graph has exactly $n$ spanning trees, for $n<5^3$. Finally, we
show that the alternating group $\mathbb{A}_5$ is uniquely
determined by tree-number of its power graph among all finite
simple groups.
\end{abstract}
\def\thefootnote{ \ }
\footnotetext{{\small {\it $2010$ Mathematics Subject
Classification}: 05C05, 05C25, 05C30, 05C50.
\\[0.1cm] {\em Keywords}: power graph, group, tree-number. }}

\renewcommand{\baselinestretch}{1}
\def\thefootnote{ \ }
\section{Problem Statement and Motivation}
Throughout this paper, only finite groups will be considered.
Moreover, all the graphs under consideration are finite, simple
(with no loops or multiple edges) and undirected. Given a
connected graph $\Gamma$ with $n$ vertices, a {\em spanning tree}
of $\Gamma$ is a connected subgraph $T$ of $\Gamma$ which has
$n-1$ edges. Spanning trees of connected graphs have been the
focus of considerable research. Actually, one of the interesting
problems in Graph Theory is the problem of finding the number of
spanning trees of a connected graph (also called the {\em
complexity} of $\Gamma$, see \cite{Biggs}), which arises in a
variety of applications. Especially, it is of interest in the
analysis of electrical networks.

There are scattered results in the literature finding an explicit
simple formula for the number of spanning trees of special graphs.
Nevertheless, the problem becomes more interesting when we are
dealing with a graph which is mainly associated with an algebraic
structure, for instance, a group. Given a finite group $G$, there
are many different ways to associate a simple graph $\Gamma_G$ to
$G$ by choosing families of its elements or subgroups as vertices
and letting two vertices be joined by an edge if and only if they
satisfy a certain relation. Now, one of the interesting questions
is to ask about characterizing certain properties of the group by
means of the properties of the associated graph, in other words,
to study the influence of a property of the graph on the
structure of the group. This line of research has attracted
considerable attention in recent years (see, for instance,
\cite{abe, dolfi, Mszz, ss}).

A graph which has recently deserved a lot of attention is the
{\em power graph} associated with a group $G$ (see \cite{n11,
CSS, KQ1, KQ}). Note that, the term ``power graph" was introduced
and first considered in \cite{KQ1}. In this graph, the vertices
are all elements of the group $G$ and two different vertices $x$
and $y$ are adjacent, and we write $x\sim y$, when $\langle
x\rangle\subseteq \langle y\rangle$ or $\langle y\rangle\subseteq
\langle x\rangle$. It is evident from the definition that the
power graph of any group is always {\em connected}, because the
identity element of the group is adjacent to all other vertices.
We denote by $\kappa(G)$ the number of spanning trees of the
power graph $\mathcal{P}(G)$ of a group $G$ and call this number
the tree-number of $\mathcal{P}(G)$, which will be investigated
for certain finite groups in this paper. More precisely, we will
obtain the explicit formulas for the number of spanning trees of
power graphs associated with the {\em cyclic group}
$\mathbb{Z}_n$, {\em dihedral group} $D_{2n}$ and the {\em
generalized quaternion group} $Q_{4n}$.

{\em Remark $1$.} These groups are considered as they play an
important role in some of the deeper parts of Finite Group Theory
and in most cases they appear as subgroups of a given group. For
example, the Cauchy's theorem \cite[Theorem 5.11]{rose} states
that if $G$ is a finite group and $p$ is a prime divisor of
$|G|$, then $G$ has at least one cyclic subgroup of order $p$.

{\em Remark $2$.} The main tool for computing these explicit
formulas is a well-known theorem due to Temperley (Theorem
\ref{th-1}), which deals with the explicit computation of a
determinant. Note that, many useful and efficient tools for
evaluating determinants are provided in \cite{K2}. It is worth
stating at this point that although we are mainly trying to
obtain these explicit formulas for the number of spanning trees of
specific graphs, meanwhile we will be faced with some {\em
interesting integer matrices} whose determinants are needed.

Evidently for two isomorphic groups $G$ and $H$ we have
$\kappa(G)=\kappa(H)$. Dose the converse hold? The answer to this
question is negative in the general case. Actually, $\kappa(G)$
can not determine the structure of a group $G$ uniquely. There
are many examples which justify this matter. For instance, for
all finite elementary abelian $2$-groups $G$ we have
$\kappa(G)=1$, or as another example
$\kappa(\mathbb{Z}_3)=\kappa(\mathbb{S}_3)=3$, where
$\mathbb{S}_3$ denotes the symmetric group on 3 letters.

A group $G$ from a class $\mathcal{C}$ is said to be recognizable
in $\mathcal{C}$ by $\kappa(G)$ (shortly, $\kappa$-recognizable in
$\mathcal{C}$) if every group $H \in \mathcal{C}$ with $\kappa
(H)=\kappa (G)$ is isomorphic to $G$. In other words, $G$ is
$\kappa$-recognizable in $\mathcal{C}$ if $h_{\mathcal{C}}(G)=1$,
where $h_{\mathcal{C}}(G)$ is the (possibly infinite) number of
pairwise non-isomorphic groups $H\in \mathcal{C}$ with
$\kappa(H)=\kappa(G)$. We denote by $\mathcal{F}$ and
$\mathcal{S}$ the classes of all finite groups and all finite
simple groups, respectively. There are some examples of groups
with $1<h_{\mathcal{F}}(G)<\infty$. For instance,
$h_{\mathcal{F}}(\mathbb{Z}_3)=2$ and
$\kappa(\mathbb{Z}_3)=\kappa(\mathbb{S}_3)=3$ (see Corollary
\ref{coro-small}). In the present paper, we find the first
example of $\kappa$-recognizable group in class $\mathcal{S}$. It
turns out that the following is true:
\begin{theorem}\label{th1-new} The alternating group $\mathbb{A}_5$ is
$\kappa$-recognizable in the class of all finite simple groups,
that is, $h_{\mathcal{S}}(\mathbb{A}_5)=1$.
\end{theorem}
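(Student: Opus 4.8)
The plan is to defeat the (infinite) class $\mathcal{S}$ in two stages: first cut it down to a finite list of candidates by controlling which primes can divide $\kappa$, and then finish by direct computation on the survivors. To begin, I would pin down the value $\kappa(\mathbb{A}_5)$. Since the identity is adjacent to every other vertex, $\mathcal{P}(\mathbb{A}_5)$ is a cone with apex $e$ over the graph $\Delta=\mathcal{P}(\mathbb{A}_5)\setminus\{e\}$ on the $59$ non-identity elements; deleting the row and column of $e$ from the Laplacian of $\mathcal{P}(\mathbb{A}_5)$ yields exactly $L(\Delta)+I$, so by the Matrix--Tree theorem in the determinantal form of Theorem \ref{th-1} one gets $\kappa(\mathcal{P}(G))=\det\!\big(L(\Delta)+I\big)$ for every $G$. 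Because each nontrivial element of $\mathbb{A}_5$ has prime order $2,3,$ or $5$ and distinct maximal cyclic subgroups meet only in $e$, the graph $\Delta$ is a disjoint union of $15$ isolated vertices, $10$ copies of $K_2$, and $6$ copies of $K_4$. Using $\det(L(K_m)+I)=(m+1)^{m-1}$ and multiplicativity over components,
\[
\kappa(\mathbb{A}_5)=1^{15}\cdot 3^{10}\cdot (5^{3})^{6}=3^{10}\,5^{18}.
\]

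The decisive feature is that this number is divisible by no prime beyond $3$ and $5$. The core of the argument is therefore a divisibility lemma, which I would establish next: if $G$ has a maximal cyclic subgroup of prime order $p$, then its $p-1$ generators form a component of $\mathcal{P}(G)\setminus\{e\}$ isomorphic to $K_{p-1}$, whence $p^{p-2}\mid\kappa(G)$, and in particular $p\mid\kappa(G)$ for $p\ge 3$. Suppose $H\in\mathcal{S}$ has $\kappa(H)=3^{10}5^{18}$ but $\pi(H)$, the set of primes dividing $|H|$, contains some prime $\ge 7$; let $r$ be the largest. I would show $r\mid\kappa(H)$ by applying the lemma to a cyclic subgroup of order $r$ (maximal because $r$ is the largest prime divisor), or, should that subgroup sit inside a larger cyclic group, by a companion estimate showing the determinant of the component carrying the elements of order $r$ is still divisible by $r$. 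Either way $r\mid 3^{10}5^{18}$, a contradiction, so $\pi(H)\subseteq\{2,3,5\}$.

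At this point the classification finishes the job. A nonabelian finite simple group has order divisible by at least three primes (Burnside's $p^aq^b$ solvability theorem), so $\pi(H)=\{2,3,5\}$ exactly and $H$ is a simple $K_3$-group; the cyclic case $\mathbb{Z}_p$ is dismissed immediately since $\kappa(\mathbb{Z}_p)=p^{p-2}$ is a prime power and never equals $3^{10}5^{18}$. By Herzog's classification of simple $K_3$-groups, the only ones with $\pi=\{2,3,5\}$ are $\mathbb{A}_5$, $\mathbb{A}_6$, and $U_4(2)\cong PSp(4,3)$. I would eliminate the last two by the same component analysis: each contains elements of order $4$, so $\mathcal{P}(H)\setminus\{e\}$ acquires a component built from a subgroup $\mathbb{Z}_4$ contributing the even factor $\det(L(K_3)+I)=2^4$; indeed a direct computation gives $\kappa(\mathbb{A}_6)=2^{180}\,3^{40}\,5^{108}$, and $\kappa(U_4(2))$ is likewise even. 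Hence neither equals $3^{10}5^{18}$, leaving $H\cong\mathbb{A}_5$ and proving $h_{\mathcal{S}}(\mathbb{A}_5)=1$.

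The main obstacle is the divisibility lemma of the second paragraph. It is transparent when the cyclic subgroup of order $r$ is maximal, since then one reads off an isolated $K_{r-1}$; the difficulty is the case where the elements of order $r$ lie inside larger cyclic subgroups, so that their component merges with others and one must argue uniformly, across all of $\mathcal{S}$, that $r$ still divides $\det(L+I)$ of the merged component. Once $\pi(H)\subseteq\{2,3,5\}$ is secured, the remaining verification is a finite and routine computation on the three explicit candidates.
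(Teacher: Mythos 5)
Your skeleton coincides with the paper's (compute $\kappa(\mathbb{A}_5)=3^{10}5^{18}$, restrict $\pi(H)$ to $\{2,3,5\}$, quote the classification of simple groups with these prime divisors, then eliminate $\mathbb{A}_6$ and $U_4(2)$), and your value $3^{10}5^{18}$ and the cone/Laplacian identity behind it are correct. But the step that does all the work --- excluding primes $r\geqslant 7$ from $\pi(H)$ --- rests on a divisibility lemma that you establish only when the cyclic subgroups of order $r$ are maximal cyclic, and your stated reason for maximality (``because $r$ is the largest prime divisor'') is false: a simple group can contain elements of order $2r$ or $3r$ even when $r$ is its largest prime divisor; for instance $\mathbb{A}_{10}$ has largest prime $7$ yet contains elements of order $21$, so its cyclic subgroups of order $7$ are not all maximal. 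In the merged-component case you offer only an unnamed ``companion estimate,'' and your final paragraph concedes this is unresolved; so the argument breaks exactly at its load-bearing point. The same defect recurs in your elimination of $U_4(2)$: that group contains elements of order $12$ and $6$, so its order-$4$ elements do \emph{not} form isolated $K_3$-components of $\mathcal{P}(U_4(2))\setminus\{e\}$, the claimed factor $\det(L(K_3)+\mathbf{I})=2^4$ evaporates, and the evenness of $\kappa(U_4(2))$ is left as a bare assertion. (Your parallel computation for $\mathbb{A}_6$ does happen to be sound, since its element orders are $1,2,3,4,5$ and each involution lies in a unique cyclic subgroup of order $4$.)

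The paper's proof is engineered to avoid precisely this obstacle: it never argues by divisibility, only by magnitude, and lower bounds of the form in Theorem \ref{th-4}$(a)$, namely $\kappa(G)\geqslant\kappa(H_1)\cdots\kappa(H_t)$ for subgroups pairwise intersecting trivially, are insensitive to how components of $\mathcal{P}(G)\setminus\{1\}$ merge. Concretely, Corollary \ref{cor-semidir-1} gives $\pi(G)\subseteq\pi(16!)$, and then Isaacs' Lemma \ref{marty} (a nonabelian simple group has at least $p+1$ cyclic subgroups of order $p$) yields, for any $p\geqslant 7$ in $\pi(G)$, the bound $\kappa(G)\geqslant p^{(p-2)(p+1)}\geqslant 7^{40}>3^{10}5^{18}$, a contradiction. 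The two surviving rivals are then killed by the same counting device rather than by parity: neither $\mathbb{A}_6$ nor $U_4(2)$ has an element of order $15$, so a Sylow $3$-subgroup acts fixed-point-freely by conjugation on the elements of order $5$, forcing $|G_3|$ to divide $4c_5$; if $|G_3|\geqslant 9$ this gives $c_5\geqslant 9$ and $\kappa(G)\geqslant (5^3)^9>3^{10}5^{18}$, which rules out both groups at once since each has $|G_3|\geqslant 9$. If you want to repair your draft with minimal surgery, replace your divisibility lemma and your parity argument by these two inequality arguments; otherwise you would have to genuinely prove that $r\mid\kappa(H)$ for odd $r\in\pi(H)$ in the presence of merged components, a claim your proposal nowhere establishes and which is considerably harder than the counting route.
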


We will also continue to ask the following two questions:

\begin{qu} Is a group $G$ isomorphic to $\mathbb{A}_n$ ($n\geqslant 4$) if and only if
$\kappa(G)=\kappa(\mathbb{A}_n)$?
\end{qu}

\begin{qu} Given a natural number $n$, determine
all groups $G$ whose power graph has exactly $n$ spanning trees,
that is $\kappa(G)=n$.
\end{qu}

{\em Remark $3$.} Note that
$\kappa(\mathbb{A}_4)=\kappa(\mathbb{Z}_3\times \mathbb{Z}_3)=3^4$
(see Table 1). Moreover, there are some natural numbers $n$ for
which a group $G$ does not exist with $\kappa(G)=n$. For example,
there does not exist a group $G$ with $\kappa(G)=2$.

After this introduction, the structure of this paper is organized
as follows: basic definitions and notation are summarized in
Section 2. In Section 3, we derive some preparatory results. In
Sections 4 and 5, we obtain explicit formulas for the tree-number
of power graphs associated with a cyclic group $\mathbb{Z}_n$, a
dihedral group $D_{2n}$ and a generalized quaternion group
$Q_{4n}$. Finally, a few applications of obtained results are
presented in Section 6: (1) a classification of groups $G$ for
which $\kappa(G)<5^3$, (2) a new chracterization of the
alternating group $\mathbb{A}_5$ by $\kappa(\mathbb{A}_5)$, and
(3) a list of $\kappa(G)$ for all groups $G$ with $|G|\leqslant
15$.
\section{Basic Definitions and Notation}
The notation and definitions used in this paper are standard and
taken mainly from \cite{Biggs, rose, west}. We will cite only a
few. Let $\Gamma=(V, E)$ be a simple graph. We denote by
$\mathbf{A}=\mathbf{A}(\Gamma)$ the adjacency matrix of $\Gamma$.
The complement $\overline{\Gamma}$ of $\Gamma$ is the simple
graph whose vertex set is $V$ and whose edges are the pairs of
non-adjacent vertices of $\Gamma$. When $U\subseteq V$, the
induced subgraph $\Gamma[U]$ is the subgraph of $\Gamma$ whose
vertex set is $U$ and whose edges are precisely the edges of
$\Gamma$ which have both ends in $U$. Two graphs are disjoint if
they have no vertex in common, and edge-disjoint if they have no
edge in common. If $\Gamma_1$ and $\Gamma_2$ are disjoint, we
refer to their union as a disjoint union, and generally denote it
by $\Gamma_1\oplus\Gamma_2$. By starting with a disjoint union of two
graphs $\Gamma_1$ and $\Gamma_2$ and adding edges joining every
vertex of $\Gamma_1$ to every vertex of $\Gamma_2$, one obtains
the join of $\Gamma_1$ and $\Gamma_2$, denoted $\Gamma_1\vee
\Gamma_2$. A clique in a graph is a set of pairwise adjacent
vertices. The clique number of a graph $\Gamma$, written
$\omega(\Gamma)$, is the number of vertices in a maximum clique
of $\Gamma$. Given a group $G$, we denote by $\omega(G)$ the set
of orders of all elements in a group $G$ and call this set the
spectrum of $G$. The spectrum $\omega(G)$ of $G$ is closed under
divisibility and determined uniquely from the set $\mu(G)$ of
those elements in $\omega (G)$ that are maximal under the
divisibility relation. In the case when $\mu(G)$ is a one-element
set $\{n\}$, we write $\mu(G)=n$. For a natural number $m$, the
alternating and symmetric group of degree $m$ denoted by
$\mathbb{A}_m$ and $\mathbb{S}_m$, respectively.
\section{Auxiliary Results} In this section we give several auxiliary
results to be used later. The first of them is the following
lemma (See Lemma 3.4 and Corollary 3.1 in \cite{banoo1}).
\begin{lm}\label{pre-1} Let $G=\langle x\rangle$ be a cyclic
group of order $n$ and $\Gamma=\mathcal{P}(G)$. Then the degree
of $x^m\in G$ in the power graph $\Gamma$ is given by
$$d_\Gamma(x^m)=\frac{n}{(m,
n)}-1+\sum_{d|(m,n) \atop d\neq
(m,n)}\phi\left(\frac{n}{d}\right),$$ where $\phi(k)$ signifies
the Euler function of a natural number $k$. In particular, all
non-trivial elements of a cyclic group with the same orders have
the same degrees in its power graph.
\end{lm}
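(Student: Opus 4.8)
The plan is to reduce the degree count to the well-understood subgroup lattice of a cyclic group. Write $k=(m,n)$ and $t=n/k$, so that $x^m$ has order $t$ and $\langle x^m\rangle$ is precisely the unique subgroup of order $t$. The key structural fact I would invoke is that in a cyclic group of order $n$ there is exactly one subgroup for each divisor of $n$, and consequently $\langle x^a\rangle\subseteq\langle x^b\rangle$ if and only if the order of $x^a$ divides the order of $x^b$. Translating the adjacency rule through this equivalence, a vertex $x^j\neq x^m$ is a neighbour of $x^m$ exactly when $o(x^j)\mid t$ or $t\mid o(x^j)$.

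Next I would partition the neighbours of $x^m$ into two disjoint classes according to whether $\langle x^j\rangle\subseteq\langle x^m\rangle$ or $\langle x^m\rangle\subsetneq\langle x^j\rangle$; using \emph{strict} containment in the second class is what guarantees the two classes are disjoint and jointly exhaustive. The first class consists of all elements of the subgroup $\langle x^m\rangle$ other than $x^m$ itself, and since $|\langle x^m\rangle|=t$ this contributes exactly $t-1=\frac{n}{(m,n)}-1$ neighbours, accounting for the first two terms of the formula.

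For the second class I would count, for each subgroup $H$ with $\langle x^m\rangle\subsetneq H\subseteq G$, the number of its generators, namely the elements $x^j$ with $\langle x^j\rangle=H$. A subgroup $H$ of order $t'$ strictly contains $\langle x^m\rangle$ precisely when $t\mid t'$ and $t'\neq t$, and it has exactly $\phi(t')$ generators, so the class contributes $\sum\phi(t')$ over such $t'$. The final bookkeeping step is to rewrite this sum in the stated form: putting $t'=n/d$, the conditions $t\mid t'$, $t'\neq t$, $t'\mid n$ become $d\mid k$, $d\neq k$, $d\mid n$, so the sum becomes $\sum_{d\mid(m,n),\,d\neq(m,n)}\phi(n/d)$, completing the formula. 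The closing ``in particular'' assertion then follows at once, since the right-hand side depends on $m$ only through $(m,n)$, and two nontrivial elements share an order precisely when they share the same value of $(m,n)$.

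I would expect the only delicate point to be the disjointness and exhaustiveness of the two classes — specifically, ensuring that the generators of $\langle x^m\rangle$ itself are counted once (inside the $t-1$ term) and not again in the second sum; the use of strict containment handles this cleanly. The reindexing $t'\mapsto n/d$ is routine but must be carried out carefully so that the divisibility conditions transform correctly.
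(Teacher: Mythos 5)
Your proof is correct. Note that the paper itself gives no argument for this lemma --- it is imported from reference \cite{banoo1} (``See Lemma 3.4 and Corollary 3.1 in \cite{banoo1}'') --- so there is no in-paper proof to compare against; your argument, splitting the neighbours of $x^m$ into the $t-1$ non-identity-distinct elements of $\langle x^m\rangle$ itself and the generators of the subgroups strictly above $\langle x^m\rangle$ (counted via the unique-subgroup-per-divisor property of cyclic groups, with the reindexing $t'=n/d$ turning the divisibility conditions into $d\mid(m,n)$, $d\neq(m,n)$), is the standard and complete way to establish it, and the ``in particular'' clause does follow immediately since the formula depends on $m$ only through $(m,n)$.
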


A complete graph is a simple graph in which any two vertices are
adjacent. The complete graph on $n$ vertices is denoted by
$K_n$.  Next lemma is taken from \cite{CSS}.
\begin{lm}\label{complete-CSS}{\rm \cite[Theorem 2.12]{CSS}} \
Let $G$ be a finite group. Then  $\mathcal{P}(G)$ is complete if
and only if $G$ is a cyclic group of order $1$ or $p^m$ for some
prime number $p$ and for some natural number $m$.
\end{lm}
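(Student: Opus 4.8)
The plan is to prove both implications by translating completeness of $\mathcal{P}(G)$ into a statement about the family of cyclic subgroups of $G$. I would begin by recording the basic reformulation: two distinct elements $x,y$ are adjacent in $\mathcal{P}(G)$ precisely when $\langle x\rangle\subseteq\langle y\rangle$ or $\langle y\rangle\subseteq\langle x\rangle$, so that $\mathcal{P}(G)$ is complete if and only if the collection $\mathcal{C}=\{\langle x\rangle : x\in G\}$ of all cyclic subgroups of $G$ is totally ordered by inclusion. This equivalence is the hinge of the whole argument.

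For the forward direction I would assume $\mathcal{P}(G)$ is complete, so that $\mathcal{C}$ is a chain. Since $G$ is finite, $\mathcal{C}$ is a finite totally ordered set and hence has a largest element $M=\langle g\rangle$; because every $x\in G$ satisfies $\langle x\rangle\subseteq M$, this forces $G=M$, so $G$ is cyclic. To pin down the order, I would argue by contradiction: if $|G|$ had two distinct prime divisors $p$ and $q$, then the cyclic group $G$ would contain a subgroup of order $p$ and one of order $q$, and since $p\nmid q$ and $q\nmid p$ these subgroups are incomparable, contradicting that $\mathcal{C}$ is a chain. Hence $|G|$ is $1$ or a prime power $p^m$.

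For the converse I would suppose $G$ is cyclic of order $1$ or $p^m$. The trivial case is immediate, so assume $G=\langle g\rangle$ has order $p^m$. Here I would invoke the standard fact that a cyclic group of order $p^m$ has exactly one subgroup of each order $p^i$ ($0\leqslant i\leqslant m$), and that these form a chain $\{1\}=H_0\subset H_1\subset\cdots\subset H_m=G$. Consequently any two cyclic subgroups $\langle x\rangle,\langle y\rangle$ are comparable, every pair of distinct elements of $G$ is adjacent, and $\mathcal{P}(G)$ is complete.

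I do not expect a serious obstacle, as the whole proof rests on one elementary structural fact: the subgroups of a cyclic group of prime-power order form a chain under inclusion (equivalently, the divisors of $p^m$ are totally ordered), which powers both directions. The only inputs beyond this are the existence of subgroups of orders $p$ and $q$ used in the forward direction --- immediate once $G$ is known to be cyclic, and in any case guaranteed in an arbitrary finite group by Cauchy's theorem \cite[Theorem 5.11]{rose} --- together with the finiteness of $G$, which ensures that the chain $\mathcal{C}$ attains a maximum.
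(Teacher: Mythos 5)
Your proof is correct. Note, however, that the paper does not prove this lemma at all: it is quoted verbatim as Theorem 2.12 of \cite{CSS} and used as a black box, so there is no internal proof to compare against. Your argument supplies a clean, self-contained replacement for the citation, and each step checks out: the reformulation (completeness of $\mathcal{P}(G)$ is equivalent to the cyclic subgroups of $G$ forming a chain under inclusion) is valid, since distinct elements generating equal cyclic subgroups are trivially adjacent; the forward direction correctly extracts the maximum $M=\langle g\rangle$ of the finite chain to get $G=M$ cyclic, and the Lagrange-type incomparability of subgroups of distinct prime orders $p,q$ rules out two prime divisors; the converse correctly rests on the fact that the subgroup lattice of a cyclic $p$-group is the chain $\{1\}=H_0\subset H_1\subset\cdots\subset H_m=G$. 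This is the standard elementary argument, and it is consistent with how the paper uses the lemma elsewhere (e.g., in Theorem 3.4$(a)$ and in the examples, where completeness of $\mathcal{P}(\mathbb{Z}_{p^m})$ feeds into Cayley's formula $\kappa(K_n)=n^{n-2}$).
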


As already mentioned in the Introduction, the number of spanning
trees of a graph is one of the most important graph-theoretical
parameters and appears in a number of applications. Given a graph
$\Gamma$, we denote by $\kappa(\Gamma)$, the number of spanning
trees of a graph $\Gamma$. In \cite{Biggs}, Biggs refers to
$\kappa(\Gamma)$ as the {\em tree-number} of $\Gamma$. Clearly
$\kappa(\Gamma)=0$ if and only if $\Gamma$ is disconnected. In
\cite{Cayley}, Cayley devised the well-known formula
$\kappa(K_n)=n^{n-2}$. The Laplacian matrix $\mathbf{Q}$ of a
graph $\Gamma$ is $\Delta-\mathbf{A}$, where $\Delta$ is the
diagonal matrix whose $i$-th diagonal entry is the degree $v_i$
in $\Gamma$ and $\mathbf{A}$ is the adjacency matrix of $\Gamma$.
The following Theorem is due to Temperley (1964).
\begin{theorem}[\cite{Tem}]\label{th-1}
The number of spanning trees of a graph $\Gamma$ with $n$ vertices
is given by the formula $$\kappa(\Gamma)=\det
(\mathbf{J}+\mathbf{Q})/n^2,$$ where $\mathbf{J}$ denotes the
matrix each of whose entries is $+1$.
\end{theorem}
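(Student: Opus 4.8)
The plan is to reduce Temperley's identity to Kirchhoff's classical Matrix-Tree Theorem, which asserts that the number of spanning trees of a connected graph equals the common value of every cofactor of the Laplacian $\mathbf{Q}$, or equivalently that $\kappa(\Gamma)=\frac{1}{n}\prod_{i=2}^{n}\lambda_i$, where $0=\lambda_1\leqslant\lambda_2\leqslant\cdots\leqslant\lambda_n$ are the eigenvalues of $\mathbf{Q}$. Once this is in hand, the whole argument becomes a short spectral computation comparing the eigenvalues of $\mathbf{Q}$ with those of $\mathbf{J}+\mathbf{Q}$.

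First I would record two structural facts about $\mathbf{Q}$. Since each row of $\mathbf{Q}=\Delta-\mathbf{A}$ sums to zero (the $i$-th diagonal entry is the degree $d_i$ and the off-diagonal entries of that row sum to $-d_i$), the all-ones column vector $\mathbf{j}$ satisfies $\mathbf{Q}\mathbf{j}=\mathbf{0}$; thus $\mathbf{j}$ is an eigenvector of $\mathbf{Q}$ for the eigenvalue $0$. Moreover $\mathbf{Q}$ is real symmetric, so by the spectral theorem it admits an orthonormal basis of eigenvectors $u_1,u_2,\dots,u_n$ with $\mathbf{Q}u_i=\lambda_i u_i$; I may take $u_1=n^{-1/2}\mathbf{j}$, so that $u_2,\dots,u_n$ are all orthogonal to $\mathbf{j}$.

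Next I would diagonalize $\mathbf{J}+\mathbf{Q}$ in this same basis. Writing $\mathbf{J}=\mathbf{j}\mathbf{j}^{\top}$, for any vector $v$ one has $\mathbf{J}v=\mathbf{j}(\mathbf{j}^{\top}v)=(\mathbf{j}^{\top}v)\,\mathbf{j}$. Hence $\mathbf{J}u_i=\mathbf{0}$ whenever $u_i\perp\mathbf{j}$, that is, for $i\geqslant 2$, while $\mathbf{J}u_1=(\mathbf{j}^{\top}u_1)\,\mathbf{j}=n\,u_1$. Combining this with $\mathbf{Q}u_1=\mathbf{0}$ and $\mathbf{Q}u_i=\lambda_iu_i$ gives $(\mathbf{J}+\mathbf{Q})u_1=n\,u_1$ and $(\mathbf{J}+\mathbf{Q})u_i=\lambda_i u_i$ for $i\geqslant2$. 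Therefore the eigenvalues of $\mathbf{J}+\mathbf{Q}$ are exactly $n,\lambda_2,\dots,\lambda_n$, whence $\det(\mathbf{J}+\mathbf{Q})=n\prod_{i=2}^{n}\lambda_i$. Invoking the Matrix-Tree Theorem in the form $\prod_{i=2}^{n}\lambda_i=n\,\kappa(\Gamma)$ then yields $\det(\mathbf{J}+\mathbf{Q})=n^2\kappa(\Gamma)$, and dividing by $n^2$ gives the claim.

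The one genuinely nontrivial input is the Matrix-Tree Theorem itself; everything else is elementary linear algebra and eigenvalue bookkeeping. If one did not wish to take it for granted, the main obstacle would be establishing that $\frac{1}{n}\prod_{i=2}^{n}\lambda_i$ actually counts spanning trees — the standard route being the Cauchy--Binet formula applied to the oriented incidence matrix of $\Gamma$, together with the observation that each nonzero maximal minor of that matrix corresponds to a spanning tree. Taking Kirchhoff's theorem as known background, however, the proof of Temperley's formula follows at once from the spectral computation above.
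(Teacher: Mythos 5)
Your proof is correct, but there is nothing in the paper to compare it against: the paper does not prove this statement at all, it simply quotes it as a known theorem of Temperley with a citation to the 1964 article. So you have supplied a proof where the authors relied on the literature. Your argument is the standard spectral derivation and it is sound: since $\mathbf{Q}$ is real symmetric with $\mathbf{Q}\mathbf{j}=\mathbf{0}$, you may pick an orthonormal eigenbasis $u_1=n^{-1/2}\mathbf{j},u_2,\dots,u_n$; writing $\mathbf{J}=\mathbf{j}\mathbf{j}^{\top}$ gives $\mathbf{J}u_i=\mathbf{0}$ for $i\geqslant 2$ and $\mathbf{J}u_1=nu_1$, so $\mathbf{J}+\mathbf{Q}$ has eigenvalues $n,\lambda_2,\dots,\lambda_n$, whence $\det(\mathbf{J}+\mathbf{Q})=n\prod_{i=2}^{n}\lambda_i=n^2\kappa(\Gamma)$ by the eigenvalue form of the Matrix--Tree theorem. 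Two minor points. First, the theorem as stated in the paper concerns an arbitrary graph $\Gamma$, while you quote Kirchhoff's theorem only for connected graphs; this costs nothing, since for disconnected $\Gamma$ one has $\lambda_2=0$, so your computation gives $\det(\mathbf{J}+\mathbf{Q})=0=n^2\kappa(\Gamma)$, and it would be worth saying this explicitly. Second, your one external input, Kirchhoff's theorem, is legitimate background here (it is proved, together with essentially this corollary, in the book of Biggs that the paper already cites), so modulo that standard reference your proposal is a complete and correct proof of the stated formula.
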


Given a graph $\Gamma$, we will let $c(\Gamma)$ denote the number
of connected components of $\Gamma$. A cut edge of $\Gamma$ is an
edge $e$ such that $c(\Gamma-e)>c(\Gamma)$. Similarly, a cut
vertex of $\Gamma$ is a vertex $v$ such that
$c(\Gamma-v)>c(\Gamma)$. In particular, a cut edge (resp. a cut
vertex) of a connected graph is an edge (resp. a vertex) whose
deletion results in a disconnected graph. For any edge $e$ which
is not a loop, we also define the graph $\Gamma \cdot e$ to be
the subgraph obtained from $\Gamma-e$ by identifying the vertices
of $e$. The following is well known, see for example
\cite[Proposition 2.2.8]{west}.
\begin{theorem}[Deletion-Contraction Theorem]\label{th-2}
The number of spanning trees of a graph $\Gamma$ satisfies the
deletion-contraction recurrence
$$\kappa(\Gamma)=\kappa(\Gamma-e)+\kappa(\Gamma\cdot e),$$ where
$e\in E(\Gamma)$. In particular, if $e\in E(\Gamma)$ is a
cut-edge, then $$\kappa(\Gamma)=\kappa(\Gamma\cdot e).$$
\end{theorem}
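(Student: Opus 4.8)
The plan is to prove the recurrence by a direct bijective argument that sorts the spanning trees of $\Gamma$ according to whether or not they use the edge $e$. Write $\mathcal{T}(\Gamma)$ for the set of spanning trees of $\Gamma$ and split it as $\mathcal{T}(\Gamma)=\mathcal{T}_0\sqcup\mathcal{T}_1$, where $\mathcal{T}_0$ consists of those spanning trees avoiding $e$ and $\mathcal{T}_1$ of those containing $e$. The first step is to identify each class with the set of spanning trees of one of the two auxiliary graphs $\Gamma-e$ and $\Gamma\cdot e$, so that the tautology $\kappa(\Gamma)=|\mathcal{T}_0|+|\mathcal{T}_1|$ becomes the asserted sum.

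For $\mathcal{T}_0$, I would observe that a subgraph $T$ of $\Gamma$ that avoids $e$ is automatically a subgraph of $\Gamma-e$, and that, since both graphs share the same vertex set, $T$ is a spanning tree of $\Gamma$ exactly when it is a spanning tree of $\Gamma-e$; this depends only on connectedness, acyclicity, and the edge count, none of which sees the absent edge. Hence $|\mathcal{T}_0|=\kappa(\Gamma-e)$. For $\mathcal{T}_1$, I would set up the contraction map $T\mapsto T\cdot e$, sending a spanning tree $T$ of $\Gamma$ that contains $e$ to the graph obtained by identifying the two endpoints of $e$ in $T$. Since $T$ is a tree and $e$ is one of its edges, $T\cdot e$ is again a tree, and it spans $\Gamma\cdot e$; conversely, every spanning tree of $\Gamma\cdot e$ lifts uniquely to a spanning tree of $\Gamma$ through $e$ by re-inserting the contracted edge. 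This yields a bijection between $\mathcal{T}_1$ and $\mathcal{T}(\Gamma\cdot e)$, so $|\mathcal{T}_1|=\kappa(\Gamma\cdot e)$, and adding the two counts gives $\kappa(\Gamma)=\kappa(\Gamma-e)+\kappa(\Gamma\cdot e)$.

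For the cut-edge case, I would simply feed the fact that $\kappa(\Gamma')=0$ precisely when $\Gamma'$ is disconnected into the recurrence just established: if $e$ is a cut edge then $\Gamma-e$ is disconnected, so $\kappa(\Gamma-e)=0$ and the recurrence collapses to $\kappa(\Gamma)=\kappa(\Gamma\cdot e)$.

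The step I expect to require the most care is the contraction bijection, because identifying the endpoints of $e$ can in general produce parallel edges, precisely when the two endpoints of $e$ share a common neighbour, and the spanning-tree count is sensitive to edge multiplicities. The clean formulation therefore lives in the category of multigraphs, where such parallel edges must be retained as distinct; the injectivity of $T\mapsto T\cdot e$ relies on this bookkeeping. It is worth noting, however, that in the cut-edge situation actually needed later no such difficulty arises: if $e=uv$ is a cut edge, then $u$ and $v$ lie in different components of $\Gamma-e$ and hence have no common neighbour, so $\Gamma\cdot e$ is again simple and the identification is unambiguous.
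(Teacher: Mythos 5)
Your proof is correct. The paper does not prove this theorem at all --- it quotes it as well known and cites West's \emph{Introduction to Graph Theory} (Proposition 2.2.8) --- and your bijective argument (splitting spanning trees by whether they contain $e$, identifying the $e$-avoiding ones with spanning trees of $\Gamma-e$ and the $e$-containing ones with spanning trees of $\Gamma\cdot e$ via contraction, then specializing with $\kappa(\Gamma-e)=0$ for a cut edge) is exactly the standard textbook proof being referenced. Your closing remark is also the right one to make: the recurrence is only valid if $\Gamma\cdot e$ is treated as a multigraph with parallel edges retained, a point the paper glosses over; and your observation that this subtlety evaporates in the cut-edge case --- the only case the paper actually uses, in computing $\kappa(D_{2n})=\kappa(\mathbb{Z}_n)$ --- ties the statement cleanly to its application.
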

The following theorem follows in a straightforward way from the
multiplication principle in combinatorics.
\begin{theorem}\label{th-222} Let $\Gamma$ be a connected graph
and let $v$ be a cut vertex of $\Gamma$ with
$$\Gamma-v=\Gamma_1\oplus\Gamma_2\oplus\cdots\oplus\Gamma_{c},$$
where $\Gamma_i$, $i=1,2,\ldots, c$, is the $i$th connected
component of $\Gamma-v$ and $c=c(\Gamma-v)$. Set
$\widetilde{\Gamma}_i=\Gamma_i+v$. Then, there holds
$$\kappa(\Gamma)=\kappa(\widetilde{\Gamma}_1)
\times\kappa(\widetilde{\Gamma}_2)\times \cdots\times
\kappa(\widetilde{\Gamma}_c).$$
\end{theorem}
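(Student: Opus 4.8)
The plan is to prove the formula by exhibiting an explicit bijection between the set of spanning trees of $\Gamma$ and the Cartesian product of the sets of spanning trees of the blocks $\widetilde{\Gamma}_1,\ldots,\widetilde{\Gamma}_c$; once such a bijection is in hand, the multiplication principle gives the product formula immediately. The structural fact that makes this work is that, since the $\Gamma_i$ are the connected components of $\Gamma-v$, no edge of $\Gamma$ joins a vertex of $\Gamma_i$ to a vertex of $\Gamma_j$ for $i\neq j$. Consequently every edge of $\Gamma$ lies in exactly one block $\widetilde{\Gamma}_i$, so that $E(\Gamma)$ is the disjoint union of the $E(\widetilde{\Gamma}_i)$, while the blocks pairwise share only the single vertex $v$.

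First I would define the forward map. Given a spanning tree $T$ of $\Gamma$, let $T_i$ be the subgraph of $T$ on the vertex set $V(\Gamma_i)\cup\{v\}$. I would check that each $T_i$ is a spanning tree of $\widetilde{\Gamma}_i$: it is acyclic, being a subgraph of the tree $T$; and it is connected because, for any vertex $u\in V(\Gamma_i)$, the unique $u$--$v$ path in $T$ cannot leave $\widetilde{\Gamma}_i$ (to do so it would have to pass through the cut vertex $v$ before reaching $v$, impossible for a simple path), so that path lies entirely in $T_i$. A connected, acyclic, spanning subgraph is a spanning tree.

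Conversely, given a spanning tree $T_i$ of each $\widetilde{\Gamma}_i$, I would form the union $T=T_1\cup\cdots\cup T_c$ on $V(\Gamma)$ and verify it is a spanning tree of $\Gamma$. It is spanning, and it is connected since every $T_i$ contains $v$ and the blocks are glued precisely at $v$. For acyclicity, a cycle using edges from two distinct blocks would have to enter and leave $v$ twice, which is impossible in a simple cycle; hence any cycle would lie inside a single $T_i$, contradicting that $T_i$ is a tree. These two constructions are visibly mutually inverse, and so define the desired bijection.

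The main point requiring care --- and the only place where the cut-vertex hypothesis is genuinely used --- is the verification that the two maps land in the correct sets: that the restriction of a spanning tree stays connected within each block, and that the glued forest has no cycle. Both reduce to the single observation that any path or cycle crossing between two blocks must pass through $v$, together with the fact that $v$ occurs at most once on a simple path or cycle. Once this is established the counting is routine, and the formula $\kappa(\Gamma)=\prod_{i=1}^{c}\kappa(\widetilde{\Gamma}_i)$ follows from the multiplication principle.
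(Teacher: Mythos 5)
Your proof is correct and follows exactly the route the paper has in mind: the paper simply asserts that the theorem ``follows in a straightforward way from the multiplication principle,'' and your explicit bijection between spanning trees of $\Gamma$ and tuples of spanning trees of the blocks $\widetilde{\Gamma}_i$ is precisely the content of that assertion, with the cut-vertex details (paths and cycles crossing blocks must pass through $v$) properly verified.
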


In what follows, we denote by $\pi(n)$ the set of the prime
divisors of a positive integer $n$. Given a group $G$, we will
write $\pi(G)$ instead of $\pi(|G|)$. If $p\in \pi(G)$, then
${\rm Syl}_p(G)$ will denote the set of all Sylow $p$-subgroups
of $G$.
\begin{theorem} \label{th-4}
Let $H_1, H_2, \ldots, H_t$ be nontrivial subgroups of a group $G$
such that $$H_i\cap H_j=\{1\}, \ \ \ \mbox{for each} \ \
1\leqslant i<j\leqslant t. $$ Then, there hold:
\begin{itemize}
\item[$(a)$]
$\kappa(G)\geqslant\kappa(H_1)\kappa(H_2)\cdots\kappa(H_t)$. In
particular, if $\pi(G)=\{p_1, \ldots, p_k\}$ and $\mu(P_i)=m_i$,
where $P_i\in {\rm Syl}_{p_i}(G)$, $1\leqslant i\leqslant k$, then
$$\kappa(G)\geqslant \prod_{i=1}^{k}m_i^{m_i-2}\geqslant \prod_{i=1}^{k}p_i^{p_i-2}.$$
\item[$(b)$] If $G=H_1\cup
H_2\cup\cdots\cup H_t$, then we have
$$\kappa(G)=\kappa(H_1) \kappa(H_2) \cdots \kappa(H_t).$$
\end{itemize}
\end{theorem}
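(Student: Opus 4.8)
The plan is to exploit the fact that the identity $1$ is adjacent to every other vertex of $\mathcal{P}(G)$, so it is a universal (and, as soon as $t\geq 2$, a cut) vertex, and to reduce both parts to the deletion--contraction and cut-vertex machinery of Theorems \ref{th-2} and \ref{th-222}. First I would record two structural observations. For any subgroup $H\leq G$ and any $x,y\in H$ the cyclic groups $\langle x\rangle,\langle y\rangle$ lie inside $H$, so the induced subgraph $\mathcal{P}(G)[H]$ is exactly $\mathcal{P}(H)$. Secondly, if $i\neq j$, $x\in H_i\setminus\{1\}$ and $y\in H_j\setminus\{1\}$, then $x\sim y$ would force (say) $x\in\langle y\rangle\subseteq H_j$, whence $x\in H_i\cap H_j=\{1\}$, a contradiction; thus there are no edges between $H_i\setminus\{1\}$ and $H_j\setminus\{1\}$. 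Writing $U=H_1\cup\cdots\cup H_t$, these two facts say that $\mathcal{P}(G)[U]$ is the one-point union (wedge) of the graphs $\mathcal{P}(H_1),\dots,\mathcal{P}(H_t)$, glued at the single common vertex $1$.

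Next I would prove that $\kappa$ is multiplicative over such a wedge, i.e. $\kappa(\mathcal{P}(G)[U])=\prod_{i=1}^t\kappa(H_i)$; since $U=G$ under the hypothesis of $(b)$, this already gives $(b)$. The vertex $1$ is a cut vertex of $\mathcal{P}(G)[U]$ (for $t\geq 2$; the case $t=1$ is trivial), and the components of $\mathcal{P}(G)[U]-1$ are precisely the components of the various $\mathcal{P}(H_i)-1$, no component meeting two different $H_i$ by the second observation. Applying Theorem \ref{th-222} with cut vertex $1$ expresses $\kappa(\mathcal{P}(G)[U])$ as the product of $\kappa(C+1)$ over all these components $C$; grouping the factors belonging to a fixed $H_i$ and re-applying Theorem \ref{th-222} inside $\mathcal{P}(H_i)$ (with the same cut vertex $1$, or trivially if $\mathcal{P}(H_i)-1$ is already connected) collapses them back to $\kappa(\mathcal{P}(H_i))=\kappa(H_i)$, giving the claim.

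For the inequality in $(a)$ it remains to compare $\mathcal{P}(G)$ with its induced subgraph on $U$. Here I would build $\mathcal{P}(G)$ from $\mathcal{P}(G)[U]$ by reinstating the vertices of $G\setminus U$ one at a time. Every such vertex $w$ is adjacent to the universal vertex $1\in U$, so I may first attach $w$ by the single pendant edge $\{w,1\}$---a cut edge, which leaves $\kappa$ unchanged by the cut-edge case of Theorem \ref{th-2}---and then add the remaining edges at $w$; adding edges to a fixed vertex set only enlarges the family of spanning trees, so $\kappa$ does not decrease. Iterating yields $\kappa(G)=\kappa(\mathcal{P}(G))\geq\kappa(\mathcal{P}(G)[U])=\prod_i\kappa(H_i)$, which is $(a)$. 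For the ``in particular'' statement I would take $H_i$ to be a cyclic subgroup of $P_i$ of order $m_i=\mu(P_i)$ (the exponent of the $p_i$-group $P_i$); distinct $H_i,H_j$ have coprime orders, hence $H_i\cap H_j=\{1\}$. By Lemma \ref{complete-CSS} each $\mathcal{P}(H_i)$ is the complete graph $K_{m_i}$, so Cayley's formula gives $\kappa(H_i)=m_i^{m_i-2}$, and the first inequality follows; the second, $\prod m_i^{m_i-2}\geq\prod p_i^{p_i-2}$, holds because $m_i\geq p_i\geq 2$ and $x\mapsto x^{x-2}$ is increasing on $[2,\infty)$.

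The step I expect to be the real obstacle is the monotonicity $\kappa(\mathcal{P}(G))\geq\kappa(\mathcal{P}(G)[U])$, because the tree-number is \emph{not} monotone under passing to induced subgraphs in general; it is forced here only by the presence of the universal vertex $1$, which lets every reinstated vertex be attached as a pendant before its further edges are added. A related pitfall is that one should \emph{not} try to apply Theorem \ref{th-222} directly to $\mathcal{P}(G)$ with cut vertex $1$: an element of $G\setminus U$ can be adjacent to non-identity elements of several $H_i$, so in $\mathcal{P}(G)-1$ the blocks coming from different subgroups may merge, and the clean wedge decomposition is available only on the induced subgraph $\mathcal{P}(G)[U]$.
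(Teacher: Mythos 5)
Your proof is correct, but for part (a) it runs on a different engine than the paper's. The paper's argument is a one-step injection: to each tuple $(T_{H_1},\dots,T_{H_t})$ of spanning trees of the graphs $\mathcal{P}(H_i)$ (which are induced subgraphs of $\mathcal{P}(G)$ by Proposition 4.5 of \cite{CSS}) it associates the subgraph $\bigcup_i T_{H_i}\cup\bigcup_{g\in G\setminus H}\{1,g\}$, where $H=\bigcup_i H_i$; since the $H_i$ pairwise meet in $\{1\}$, this is a spanning tree of $\mathcal{P}(G)$, and distinct tuples give distinct trees, so $\kappa(G)\geqslant\kappa(H_1)\cdots\kappa(H_t)$ at once. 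Note that this needs neither your no-cross-edges observation nor monotonicity of $\kappa$ under edge addition --- only that the vertex sets pairwise intersect in the identity. Your route instead computes $\kappa(\mathcal{P}(G)[U])$ exactly, as $\prod_i\kappa(H_i)$, via the cut-vertex product formula (Theorem \ref{th-222}), and then rebuilds $\mathcal{P}(G)$ by pendant attachment at $1$ (the cut-edge case of Theorem \ref{th-2}) followed by edge additions. What your version buys: part (b) falls out immediately as the exact case of your wedge computation, whereas the paper dismisses (b) as ``a straightforward verification'' --- your argument is precisely that missing verification; and you make explicit the genuinely delicate point that $\kappa$ is not monotone under passing to induced subgraphs in general, and that the universal vertex $1$ is what rescues the comparison (as well as the warning that Theorem \ref{th-222} cannot be applied to $\mathcal{P}(G)$ itself, since elements of $G\setminus U$ may be adjacent to several $H_i$ and $1$ need not be a cut vertex there). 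What the paper's version buys: brevity, and independence from any machinery beyond the definition of a spanning tree. The ``in particular'' clause is handled identically in both proofs: cyclic subgroups $\langle x_i\rangle$ of order $m_i=\mu(P_i)$ have pairwise trivial intersection, are complete by Lemma \ref{complete-CSS}, and Cayley's formula gives $m_i^{m_i-2}$.
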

\begin{proof} $(a)$ Recall that by Proposition 4.5 in \cite{CSS}, $\mathcal{P}(H_i)$ is an
induced subgraph of $\mathcal{P}(G)$. Let
$H:=\bigcup\limits_{i=1}^tH_i$. Now to each spanning tree
$\bigcup\limits_{i=1}^tT_{H_i}$ of
$\bigcup\limits_{i=1}^t\mathcal{P}(H_i)$,  we associate a
spanning tree
$$T_G=\bigcup_{i=1}^tT_{H_i}\cup \bigcup_{g\in G\setminus H}\{1, g\},$$
of  $\mathcal{P}(G)$, which shows that
$\kappa(G)\geqslant\kappa(H_1)\kappa(H_2)\cdots\kappa(H_t)$.

Let $x_i$ be a $p_i$-element of $G$ of order $m_i$ and
$Q_i:=\langle x_i\rangle$. Then, by Lemma \ref{complete-CSS},
$\mathcal{P}(Q_i)$ is a complete graph of order $m_i$, and so
Cayley formula implies that $\kappa(Q_i)=m_i^{m_i-2}$. The result
now follows by applying the first part.

$(b)$  It is a straightforward verification.
\end{proof}

As immediate consequences of Theorem \ref{th-4}, we have the
following four corollaries.

\begin{corollary}\label{cor-dir} Let $G=H_1\times H_2\times \cdots\times H_n$, where
$n$ is a positive integer. Then $$\kappa(G)\geqslant
\kappa(H_1)\kappa(H_2)\cdots\kappa(H_n).$$
\end{corollary}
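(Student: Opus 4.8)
The plan is to view $G = H_1 \times \cdots \times H_n$ through its canonical coordinate subgroups and then invoke Theorem \ref{th-4}$(a)$. First I would recall the observation made in the Introduction that isomorphic groups have the same tree-number, namely $\kappa(G)=\kappa(H)$ whenever $G\cong H$. This frees me to work with isomorphic copies of the factors sitting inside $G$ rather than with the abstract factors themselves.

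The key construction is, for each index $i$, the internal copy
$$\widetilde{H}_i = \{1\} \times \cdots \times \{1\} \times H_i \times \{1\} \times \cdots \times \{1\},$$
consisting of those tuples whose only possibly nontrivial coordinate is the $i$-th one. Each $\widetilde{H}_i$ is a subgroup of $G$ isomorphic to $H_i$, so that $\kappa(\widetilde{H}_i)=\kappa(H_i)$. The crucial point is to verify the pairwise-trivial-intersection hypothesis of Theorem \ref{th-4}: for $i\neq j$, any element common to $\widetilde{H}_i$ and $\widetilde{H}_j$ must have trivial $i$-th coordinate (because it lies in $\widetilde{H}_j$) and trivial $j$-th coordinate (because it lies in $\widetilde{H}_i$), while all remaining coordinates are trivial for both; hence $\widetilde{H}_i \cap \widetilde{H}_j = \{1\}$.

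With the hypotheses in place, it remains only to apply Theorem \ref{th-4}$(a)$ to the family $\{\widetilde{H}_i\}$, obtaining $\kappa(G) \geqslant \prod_i \kappa(\widetilde{H}_i) = \prod_i \kappa(H_i)$, which is the desired inequality.

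The one point needing care, and the only real obstacle (though a mild one), is the nontriviality requirement built into Theorem \ref{th-4}, whose statement presupposes nontrivial subgroups. I would dispose of this by first discarding from the product any factor $H_i=\{1\}$. Such a factor contributes a single-vertex power graph, for which $\kappa=1$, so its removal alters neither side of the asserted inequality; applying the theorem to the surviving (nontrivial) coordinate subgroups then delivers the full claim.
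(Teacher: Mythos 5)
Your proof is correct and matches the paper's (implicit) argument: the paper presents this corollary as an immediate consequence of Theorem \ref{th-4}, exactly via the coordinate subgroups $\widetilde{H}_i$ with pairwise trivial intersections that you construct. Your extra care in discarding trivial factors (each contributing $\kappa=1$) to meet the nontriviality hypothesis of Theorem \ref{th-4} is a sound and welcome detail.
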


\begin{corollary}\label{cor-semidir} If $G=K\rtimes C$ is a semidirect product
of $K$ by $C$ (especially, if $G$ is a Frobenius group with
kernel $K$ and complement $C$), then $$\kappa(G)\geqslant
\kappa(K)\kappa(C).$$
\end{corollary}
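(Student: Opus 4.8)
The plan is to read this off as the special case $t=2$ of Theorem \ref{th-4}(a). The one structural fact I need is that an internal semidirect product $G=K\rtimes C$ presents $K$ and $C$ as genuine subgroups of $G$ satisfying $K\cap C=\{1\}$; this is precisely part of the meaning of ``semidirect product'' (every element of $G$ factors uniquely as $kc$ with $k\in K$, $c\in C$, which forces trivial intersection). So I would set $H_1=K$ and $H_2=C$, check the single hypothesis of Theorem \ref{th-4}(a)---namely that $H_1,H_2$ are nontrivial subgroups with $H_1\cap H_2=\{1\}$---and then invoke that theorem to conclude $\kappa(G)\geqslant\kappa(K)\kappa(C)$ immediately.

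First I would dispose of the degenerate situations separately, since Theorem \ref{th-4} is stated for \emph{nontrivial} subgroups. If either $K$ or $C$ equals $\{1\}$, then $G$ coincides with the other factor; the power graph of the trivial group is a single vertex, so $\kappa(\{1\})=1$, and the claimed inequality then holds with equality $\kappa(G)=\kappa(K)\kappa(C)$. With that out of the way, the main case has both factors nontrivial, and the hypotheses of Theorem \ref{th-4}(a) are verified verbatim as described above, giving the bound with no further work.

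For the parenthetical Frobenius assertion, I would simply recall that a Frobenius group with kernel $K$ and complement $C$ is, by Frobenius's theorem on the normality of the kernel, an internal semidirect product $G=K\rtimes C$; hence the Frobenius statement is subsumed by the general one and needs no separate argument. I do not expect any genuine obstacle here, as the result is a direct specialization of Theorem \ref{th-4}(a). The only point deserving a word of care is confirming the trivial-intersection hypothesis $K\cap C=\{1\}$, which is built into the definition of the semidirect product, together with the small bookkeeping of the degenerate cases noted above.
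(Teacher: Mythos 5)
Your proposal is correct and follows exactly the paper's route: the paper presents this corollary as an immediate consequence of Theorem \ref{th-4}(a), applied with $H_1=K$, $H_2=C$ and the trivial-intersection property $K\cap C=\{1\}$ built into the semidirect product. Your extra care about the degenerate case where $K$ or $C$ is trivial (excluded by the theorem's ``nontrivial subgroups'' hypothesis but handled directly since $\kappa(\{1\})=1$) is a minor, sound refinement of the same argument.
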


\begin{corollary}\label{cor-semidir-1} Let $G$ be a finite group and let $p$ be the smallest prime such that
$\kappa(G)<p^{p-2}$. Then $\pi(G)\subseteq \pi((p-1)!)$.
\end{corollary}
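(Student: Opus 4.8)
The plan is to use the contrapositive together with the first, stronger inequality in part $(a)$ of Theorem \ref{th-4}. We are given that $p$ is the smallest prime with $\kappa(G)<p^{p-2}$, and we wish to show $\pi(G)\subseteq\pi((p-1)!)$. Equivalently, I will show that no prime $q\geqslant p$ can divide $|G|$, which immediately yields $\pi(G)\subseteq\{2,3,\ldots,p-1\}$, and since every prime below $p$ divides $(p-1)!$, this gives $\pi(G)\subseteq\pi((p-1)!)$.

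\emph{First I would} suppose, for contradiction, that some prime $q\in\pi(G)$ satisfies $q\geqslant p$. By Cauchy's theorem (quoted as Remark $1$ in the Introduction), $G$ then contains a cyclic subgroup of order $q$, so taking $k=1$, $P_1\in{\rm Syl}_{q}(G)$ and $m_1=\mu(P_1)\geqslant q$ in the specialization of Theorem \ref{th-4}$(a)$, we obtain
$$\kappa(G)\geqslant m_1^{\,m_1-2}\geqslant q^{\,q-2}.$$
\emph{The next step} is to observe that the function $t\mapsto t^{t-2}$ is increasing for $t\geqslant p$ (indeed for all real $t\geqslant 2$ once one checks the derivative of $(t-2)\log t$ is positive there), so from $q\geqslant p$ we get $q^{\,q-2}\geqslant p^{\,p-2}$, and hence $\kappa(G)\geqslant p^{\,p-2}$.

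This last inequality contradicts the hypothesis $\kappa(G)<p^{p-2}$. Therefore no prime $q\geqslant p$ divides $|G|$, and every prime divisor of $|G|$ is strictly smaller than $p$. Consequently $\pi(G)\subseteq\{2,3,\ldots,p-1\}\subseteq\pi((p-1)!)$, as desired.

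I do not anticipate a serious obstacle here, since the result is essentially a repackaging of the lower bound $\kappa(G)\geqslant\prod_i m_i^{m_i-2}\geqslant\prod_i p_i^{p_i-2}$ already established in Theorem \ref{th-4}$(a)$: one simply isolates the single prime factor coming from $q$. The only point requiring care is the monotonicity of $t\mapsto t^{t-2}$ used to pass from $q\geqslant p$ to $q^{q-2}\geqslant p^{p-2}$; this is elementary, but it is the one quantitative input that makes the argument go through, so I would state it explicitly rather than treat it as obvious.
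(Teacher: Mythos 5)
Your proof is correct and is essentially the argument the paper intends: the corollary is stated there as an immediate consequence of Theorem \ref{th-4}$(a)$, exactly via the contrapositive bound $\kappa(G)\geqslant q^{q-2}\geqslant p^{p-2}$ for any prime $q\geqslant p$ dividing $|G|$. Your explicit verification of the monotonicity of $t\mapsto t^{t-2}$ is a reasonable detail to spell out, and nothing in your argument (including not using the minimality of $p$, which is indeed unnecessary) deviates from the paper's route.
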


Given a group $G$, we put $G^\#=G\setminus \{1\}$. A group $G$ is called {\em Element Prime Order} group if every nonidentity element of $G$ has prime order, i.e.,  $\omega(G)\setminus \{1\}
=\pi(G)$. We can consider an EPO-group $G$ as follows:
$$G=\biguplus_{p\in \pi(G)}(\underbrace{\mathbb{Z}_p^{\#}\uplus \cdots \uplus \mathbb{Z}_p^{\#}}_{c_p\mbox{-times}})\cup \{1\},$$
where $c_p$ signifies the number of cyclic subgroups of order $p$
in $G$, and hence
$$\mathcal{P}(G)=K_1\vee
\bigoplus_{p\in \pi(G)} c_pK_{p-1}.$$

\begin{corollary}\label{cor-epo} Let $G$ be an EPO-group. Then,
there holds
$$\kappa(G)=\prod_{p\in \pi(G)}p^{(p-2)c_p}.$$
\end{corollary}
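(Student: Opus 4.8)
The plan is to read off the result directly from the explicit description of the power graph of an EPO-group that precedes the statement, namely
$$\mathcal{P}(G)=K_1\vee \bigoplus_{p\in \pi(G)} c_pK_{p-1},$$
together with Theorem \ref{th-4}$(b)$. First I would set up the relevant family of subgroups: let $H_1,H_2,\ldots,H_t$ be the list of all cyclic subgroups of $G$ of prime order, so that among them exactly $c_p$ have order $p$ for each $p\in\pi(G)$. Since $G$ is an EPO-group, every nonidentity element has prime order and hence generates one of the $H_i$; thus $G=H_1\cup H_2\cup\cdots\cup H_t$. Moreover, two distinct subgroups of prime order can share no nonidentity element (a common nonidentity element would generate both), so $H_i\cap H_j=\{1\}$ for $i\neq j$.

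With these hypotheses verified, Theorem \ref{th-4}$(b)$ applies verbatim and yields
$$\kappa(G)=\kappa(H_1)\kappa(H_2)\cdots\kappa(H_t).$$
Next I would evaluate each factor. Each $H_i$ is cyclic of prime order $p$, so by Lemma \ref{complete-CSS} its power graph $\mathcal{P}(H_i)$ is the complete graph $K_p$, and Cayley's formula $\kappa(K_p)=p^{p-2}$ gives $\kappa(H_i)=p^{p-2}$. Grouping the $t$ factors according to the order of the subgroup, there are precisely $c_p$ of them equal to $p^{p-2}$ for each prime $p$, so
$$\kappa(G)=\prod_{p\in\pi(G)}\bigl(p^{p-2}\bigr)^{c_p}=\prod_{p\in\pi(G)}p^{(p-2)c_p},$$
which is the asserted formula.

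An equivalent and equally short route avoids Theorem \ref{th-4} in favour of the cut-vertex formula: the identity is a cut vertex of $\mathcal{P}(G)$, and deleting it leaves $\bigoplus_{p}c_pK_{p-1}$; restoring the identity to each component $K_{p-1}$ produces $K_p$, so Theorem \ref{th-222} combined with $\kappa(K_p)=p^{p-2}$ gives the same product. I do not anticipate a genuine obstacle here, since the statement is essentially a bookkeeping corollary of the graph decomposition already established; the only point demanding care is the justification that the prime-order subgroups are pairwise trivially intersecting and cover $G$, which is exactly the content of the EPO hypothesis and is what licenses the use of Theorem \ref{th-4}$(b)$.
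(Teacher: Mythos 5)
Your proof is correct and is essentially the paper's own argument: the paper states the corollary as an immediate consequence of Theorem \ref{th-4}$(b)$, using exactly the decomposition of an EPO-group into its pairwise trivially intersecting prime-order cyclic subgroups (displayed just before the corollary), with each factor $\kappa(H_i)=p^{p-2}$ coming from Lemma \ref{complete-CSS} and Cayley's formula. Your alternative route via the cut-vertex Theorem \ref{th-222} is a fine additional observation, but the main line of your argument coincides with the intended one.
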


In the same manner as in the proof of Theorem \ref{th-4}, we can
prove the following theorem:

\begin{theorem} \label{th-5}
Let $G$ be a group and let $\Omega_1, \Omega_2, \ldots, \Omega_t$
be cliques of $\mathcal{P}(G)$ such that $\Omega_i\cap
\Omega_j=\{1\}$, for each $1\leqslant i<j\leqslant t$.  Then,
there holds
$$\kappa(G)\geqslant \prod_{i=1}^{t}|\Omega_i|^{|\Omega_i|-2}\geqslant \omega^{\omega-2},$$
where $\omega=\omega(\mathcal{P}(G))$. In particular, we have
$$\kappa(G)\geqslant \prod_{p\in \pi(G)}p^{(p-2)c_p},$$ where $c_p$
signifies the number of cyclic subgroups of order $p$ in $G$.
\end{theorem}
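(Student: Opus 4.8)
The plan is to follow the proof of Theorem \ref{th-4}(a) almost verbatim, replacing the subgroups $H_i$ by the cliques $\Omega_i$ and applying Cayley's formula directly to each clique. First I would record two structural facts. Since $1$ is adjacent to every vertex of $\mathcal{P}(G)$, the set $\Omega_i\cup\{1\}$ is again a clique, so we may assume $1\in\Omega_i$ for every $i$ without loss of generality (for $t\geqslant 2$ the hypothesis $\Omega_i\cap\Omega_j=\{1\}$ already forces this). Second, because $\Omega_i$ is a clique, the induced subgraph $\mathcal{P}(G)[\Omega_i]$ is the complete graph $K_{|\Omega_i|}$, and hence by Cayley's formula it has exactly $|\Omega_i|^{|\Omega_i|-2}$ spanning trees.

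The core construction is then the following. Put $\Omega=\bigcup_{i=1}^t\Omega_i$ and choose an arbitrary spanning tree $T_i$ of $\mathcal{P}(G)[\Omega_i]$ for each $i$. Since the $\Omega_i$ pairwise share only the single vertex $1$, no edge can belong to two of the $T_i$ (an edge common to $T_i$ and $T_j$ would have both endpoints in $\Omega_i\cap\Omega_j=\{1\}$), so $\bigcup_{i=1}^t T_i$ has $\sum_i(|\Omega_i|-1)=|\Omega|-1$ edges; it is connected because each $T_i$ contains $1$, and therefore it is a tree spanning $\Omega$. I would extend it to a spanning tree of all of $\mathcal{P}(G)$ by attaching every remaining element to the identity, namely
$$T_G=\bigcup_{i=1}^t T_i\ \cup\ \bigcup_{g\in G\setminus\Omega}\{1,g\},$$
which is admissible since $1\sim g$ for all $g$. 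The assignment $(T_1,\ldots,T_t)\mapsto T_G$ is injective: the pendant edges $\{1,g\}$ lie in no $\Omega_i$, so restricting the edges of $T_G$ to $\Omega_i$ recovers exactly $T_i$. This yields $\kappa(G)\geqslant\prod_{i=1}^t|\Omega_i|^{|\Omega_i|-2}$.

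The two remaining assertions are specialisations of this bound. For the estimate $\kappa(G)\geqslant\omega^{\omega-2}$ I would simply take $t=1$ with $\Omega_1$ a maximum clique, so that $|\Omega_1|=\omega$ and the product collapses to $\omega^{\omega-2}$. For the final ``in particular'' claim I would invoke Lemma \ref{complete-CSS}: every cyclic subgroup of order $p$ induces a complete graph $K_p$ and is thus a clique, while two distinct subgroups of prime order meet only in $\{1\}$; taking the family of all $c_p$ such cliques as $p$ ranges over $\pi(G)$ satisfies the hypotheses, and the product becomes $\prod_{p\in\pi(G)}p^{(p-2)c_p}$.

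The only delicate point — and the step I would verify most carefully — is the claim that gluing the trees $T_i$ along the single common vertex $1$ yields a genuine spanning tree: one must confirm acyclicity (any cycle would have to pass twice through $1$, which is impossible inside the edge-disjoint union of trees sharing only that vertex) together with the edge count $|\Omega|-1$, and then check that each pendant attachment preserves the tree property. Everything else is the same bookkeeping as in Theorem \ref{th-4}, so I do not expect any serious obstacle.
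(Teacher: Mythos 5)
Your proposal is correct and takes essentially the same route as the paper: the paper proves Theorem \ref{th-5} only by the remark that it follows ``in the same manner as in the proof of Theorem \ref{th-4}'', and your construction---spanning trees of the complete graphs induced by the cliques, counted by Cayley's formula, glued at the identity and completed by pendant edges $\{1,g\}$---is exactly that argument. You have in fact supplied details the paper leaves implicit (edge-disjointness, the edge count giving the tree property, and injectivity of the assignment), and your reading of the bound $\omega^{\omega-2}$ via a single maximum clique is the intended one.
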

{\em Some Examples.} $(a)$ The power graph $\mathcal{P}(G)$ of an
elementary $p$-group $G$ of order $p^n$ consist of
$$(p^n-1)/(p-1)=p^{n-1}+p^{n-2}+\cdots+p+1,$$ cliques on $p$ vertices sharing
the identity element. Now, using Corollary \ref{cor-epo}, we get
\begin{equation}\label{equation-1}
\kappa(G)=p^{\frac{p^n-1}{p-1}(p-2)}.
\end{equation}
$(b)$ If $G$ is a non-abelian group of order 21, then $G\cong
\mathbb{Z}_7\rtimes \mathbb{Z}_3$. Therefore, using Corollary
\ref{cor-semidir}, $\kappa(G)\geqslant \kappa(\mathbb{Z}_7)\cdot
\kappa(\mathbb{Z}_3)=7^5\cdot 3$. However, $G$ is an EPO-group
with $\omega(G)=\{1, 3, 7\}$, and we have
$$\mathcal{P}(G)=K_1\vee(7K_2\oplus K_6),$$
for which we conclude that $\kappa(G)=3^7\cdot 7^5$.

$(c)$ According to Theorem 11.3 in \cite{rose},  a group is
nilpotent if and only if it is the direct product of its Sylow
subgroups. Hence, if $G$ is a nilpotent group, then
$$G\cong \prod_{p\in \pi(G)}G_p,$$ where $G_p\in {\rm Syl}_p(G)$.
Now, by Corollary \ref{cor-dir}, we obtain $$\kappa(G)\geqslant
\prod _{p\in \pi(G)}\kappa(G_p).$$ In particular, if
$n=p_1^{\alpha_1}p_2^{\alpha_2}\cdots p_k^{\alpha_k}$ is the
prime factorization of a natural number $n$, where $k, \alpha_1,
\ldots, \alpha_k$ are positive integers and $p_1, \ldots, p_k$
are distinct primes, then $\mathbb{Z}_n\cong
\mathbb{Z}_{p_1^{\alpha_1}}\times \mathbb{Z}_{p_2^{\alpha_2}}
\times \cdots \times \mathbb{Z}_{p_k^{\alpha_k}}$, and again, by
Corollary \ref{cor-dir} and Cayley formula, we have:
$$\kappa(\mathbb{Z}_n)\geqslant \prod_{i=1}^{k}\kappa(\mathbb{Z}_{p_i^{\alpha_i}})
=\prod_{i=1}^{k}(p_i^{\alpha_i})^{p_i^{\alpha_i}-2}.$$ In the next
section, we will find an explicit formula for the tree-number
$\kappa(\mathbb{Z}_n)$.
\section{The Tree-number of $\mathcal{P}(\mathbb{Z}_n)$}
Let $n$ be a natural number and let $G=\mathbb{Z}_n$ be a finite
cyclic group of order $n$. Then for every divisor $d$ of $n$,
there exists a unique subgroup of order $d$, and so the number of
all elements of order $d$ in $G$ is equal to $\phi(d)$.
Therefore, if $d_1>d_2>\cdots>d_k$ are all divisors of $n$
(evidently $d_1=n$ and $d_k=1$), then we
have:$^\ast$\footnote{$^\ast$It is well known that $n=\sum_{d|n}
\phi(d)$.}
$$n=\sum_{i=1}^{k}\phi(d_i).$$
As we mentioned in Lemma \ref{pre-1}, all non-trivial elements of
$G$ with the same orders have the same degrees in
$\mathcal{P}(G)$. Let $n_i$, $i=1, 2, \ldots, k$, denote the
degree of all elements of order $d_i$ in $\mathcal{P}(G)$. Let
$\mathbf{A}$ be the adjacency matrix of $\mathcal{P}(G)$, and let
$\mathbf{Q}$ be the Laplacian matrix. Then the matrix
$\mathbf{J}+\mathbf{Q}$ has the following block-matrix structure:
\begin{equation}\label{e2} \mathbf{J}+\mathbf{Q}=\left[
\begin{array}{cccc}
D_{11} & D_{12} & \ldots & D_{1k} \\[0.1cm]
D_{21} & D_{22} & \ldots & D_{2k} \\[0.1cm]
\vdots & \vdots & \ddots & \vdots \\[0.1cm]
D_{k1} & D_{k2} & \ldots & D_{kk} \\[0.1cm]
\end{array}\right ]
\end{equation} where $D_{ij}$ is a matrix of size $\phi(d_i)\times \phi(d_j)$
with
$$D_{ij}=\left\{\begin{array}{lll} m_i\mathbf{I} & if & i=j,\\[0.2cm]
0 & if & i\neq j, \ \ d_i|d_j \  \mbox{or} \ d_j|d_i,\\[0.2cm]
\mathbf{J} &  & \mbox{otherwise},
\end{array} \right.$$
where $m_i=n_i+1$, $i=1, 2, \ldots, k$. For instance, in the case
when $G=\mathbb{Z}_{12}$, the matrix $\mathbf{J}+\mathbf{Q}$ is
given by:
\begin{equation}\label{e3}
\left[ \begin{array}{llll|ll|ll|ll|l|l}
m_1 & . & .& .& .& .&  .& .& .& .& .& . \\[0.1cm]
. & m_1 & .& .& .& .&  .& .& .& .& .& . \\[0.1cm]
. & . & m_1& .& .& .&  .& .& .& .& .& . \\[0.1cm]
. & . & .& m_1& .& .&  .& .& .& .& .& . \\[0.1cm]
\hline
. & . & .& .& m_2& .&  1& 1& .& .& .& . \\[0.1cm]
. & . & .& .& .& m_2&  1& 1& .& .& .& . \\[0.1cm]
\hline
. & . & .& .& 1& 1&  m_3& .& 1& 1& .& . \\[0.1cm]
. & . & .& .& 1& 1&  .& m_3& 1& 1& .& . \\[0.1cm]
\hline
. & . & .& .& .& .&  1 & 1& m_4& .& 1& . \\[0.1cm]
. & . & .& .& .& .&  1 & 1& .& m_4& 1& . \\[0.1cm]
\hline
. & . & .& .& .& .&  . & .& 1& 1& m_5& . \\[0.1cm]
\hline
. & . & .& .& .& .&  . & .& .& .& .& m_6 \\[0.1cm]
\end{array}\right ].\end{equation}
In this section, we set
$$\lambda_i:=\frac{m_i}{\phi(d_i)}, \ i=2, \ldots, k-1, \ \ \mbox{and} \ \ \Phi:=\prod_{i=2}^{k-1}\lambda_i.$$

To state our first result, we have to introduce a new definition.
Actually, the {\em divisor graph} $D(n)$ of a natural number $n$
is defined as follows: the vertex set of this graph is
$\pi_d(n)$, the set of all divisors of $n$, and two divisors
$d_i$ and $d_j$ of $n$ are adjacent if and only if $d_i|d_j$ or
$d_j|d_i$. As usual, we denote the complement of $D(n)$ by
$\overline{D}(n)$. For instance, the subgraph $D(30)\backslash
\{1, 30\}$ of divisor graph $D(30)$ and its complement are
depicted in Fig. 1.

\vspace{1.5cm} \setlength{\unitlength}{3mm}
\begin{picture}(0,0)(-7,9)
\linethickness{0.7pt} %
\put(0,10){\circle*{0.3}}%
\put(6,10){\circle*{0.3}}%
\put(0,7){\circle*{0.3}}%
\put(6,7){\circle*{0.3}}%
\put(3,11.5){\circle*{0.3}}%
\put(3,5.5){\circle*{0.3}}%
\put(0,10){\line(2,1){3}}%
\put(0,10){\line(0,-1){3}}%
\put(0,7){\line(2,-1){3}}%
\put(3,5.5){\line(2,1){3}}%
\put(6,7){\line(0,1){3}}%
\put(6,10){\line(-2,1){3}}%
\put(-0.9,10.3){$2$}%
\put(6.5,10.3){$5$}%
\put(-0.9,6.5){$6$}%
\put(2.7,4.2){$3$}%
\put(6.5,6.5){$15$}%
\put(2.6,12){$10$}%
\put(12.2,10.3){$2$}%
\put(19.2,10.3){$5$}%
\put(12.1,6.5){$6$}%
\put(15.7,4.2){$3$}%
\put(19.4,6.5){$15$}%
\put(15.6,12){$10$}%
\put(13,10){\circle*{0.3}}%
\put(19,10){\circle*{0.3}}%
\put(13,7){\circle*{0.3}}%
\put(19,7){\circle*{0.3}}%
\put(16,11.5){\circle*{0.3}}%
\put(16,5.5){\circle*{0.3}}%
\put(13,10){\line(2,-1){6}}%
\put(13,7){\line(2,1){6}}%
\put(13,7){\line(2,3){3}}%
\put(19,7){\line(-2,3){3}}%
\put(13,10){\line(2,-3){3}}%
\put(19,10){\line(-2,-3){3}}%
\put(13,7){\line(1,0){6}}%
\put(13,10){\line(1,0){6}}%
\put(16,5.5){\line(0,0){6}}%
\put(-5,2){{\small {\bf Fig. 1.} The graph $D(30)\backslash \{1,
30\}$ and its complement $\overline{D}(30)\backslash \{1, 30\}$.}}
\end{picture}\\[2.2cm]

We are now ready to calculate the number of spanning trees of a
power graph associated with a cyclic group of order $n$.
\begin{theorem}\label{result2} Let $n$ be a positive integer and $d_1>d_2>\cdots>d_k$ all divisors of
$n$. With the notation as explained above, there holds
\begin{equation}\label{trees-cyclic} \kappa(\mathbb{Z}_{n})=\prod_{i=1}^{k}m_i^{\phi(d_i)}\Big(\Phi+
\sum_{\Lambda} \det\mathbf{A}(\Lambda)\lambda_2^{t_2}
\lambda_3^{t_3}\cdots \lambda_{k-1}^{t_{k-1}}\Big)/(\Phi
n^2),\end{equation} where $t_i\in \{0, 1\}$, $i=2, 3, \ldots,
k-1$, and the summation is over all induced subgraphs $\Lambda$ of
$\overline{D}(n)\setminus \{d_1, d_k\}$ to the vertices
$\{d_{i_1}, \ldots, d_{i_s}\}$ corresponding to
$t_{i_1}=\cdots=t_{i_s}=0$.
\end{theorem}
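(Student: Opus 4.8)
The plan is to start from Temperley's theorem (Theorem \ref{th-1}), which reduces the problem to evaluating $\det(\mathbf{J}+\mathbf{Q})$ for the block matrix displayed in \eqref{e2}, and then to simplify this determinant by exploiting the rank-one nature of the off-diagonal blocks. The guiding idea is that each incomparable coupling is an all-ones block $\mathbf{J}=e_ie_j^{\top}$, so only the ``all-ones direction'' inside each block really interacts with the other blocks.

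First I would observe that the block row and column indexed by $d_1=n$ and by $d_k=1$ decouple from the rest: every other divisor is comparable to $n$ and to $1$, so $D_{1j}=D_{j1}=0$ and $D_{kj}=D_{jk}=0$ for $j\neq 1,k$. Hence these two diagonal blocks $m_1\mathbf{I}$ and $m_k\mathbf{I}$ split off as direct summands, contributing the factor $m_1^{\phi(d_1)}m_k^{\phi(d_k)}$ and leaving a ``middle'' matrix $\mathbf{M}'$ on the indices $2,\dots,k-1$, whose off-diagonal block $(i,j)$ equals $\mathbf{J}=e_ie_j^{\top}$ exactly when $d_i,d_j$ are incomparable (i.e.\ joined in $\overline{D}(n)$) and $0$ otherwise, with diagonal blocks $m_i\mathbf{I}$.

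The key step is a block-wise orthogonal change of basis. In each block $i$ I would pick an orthogonal matrix $U_i$ of size $\phi(d_i)$ whose first column is the normalized all-ones vector $e_i/\sqrt{\phi(d_i)}$, and conjugate $\mathbf{M}'$ by $U=\mathrm{diag}(U_2,\dots,U_{k-1})$, which leaves the determinant unchanged. Each diagonal block $m_i\mathbf{I}$ is invariant, while each incomparable coupling $e_ie_j^{\top}$ transforms to $\sqrt{\phi(d_i)\phi(d_j)}\,E_{11}$, surviving only between the two ``all-ones'' coordinates. Reordering coordinates so that the $k-2$ all-ones directions come first, the transformed matrix becomes block-diagonal: a ``head'' $(k-2)\times(k-2)$ matrix $\widetilde{\mathbf{M}}$ with $\widetilde{\mathbf{M}}_{ii}=m_i$ and $\widetilde{\mathbf{M}}_{ij}=\sqrt{\phi(d_i)\phi(d_j)}$ for incomparable $d_i,d_j$, plus a purely diagonal ``tail'' contributing $\prod_{i=2}^{k-1}m_i^{\phi(d_i)-1}$. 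Thus $\det\mathbf{M}'=\det\widetilde{\mathbf{M}}\cdot\prod_{i=2}^{k-1}m_i^{\phi(d_i)-1}$.

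It then remains to unwind $\det\widetilde{\mathbf{M}}$. Conjugating by $S=\mathrm{diag}(\sqrt{\phi(d_i)})$ gives $\det\widetilde{\mathbf{M}}=\bigl(\prod_{i=2}^{k-1}\phi(d_i)\bigr)\det\mathbf{N}$, where $\mathbf{N}=\mathrm{diag}(\lambda_2,\dots,\lambda_{k-1})+\mathbf{A}(\overline{D}(n)\setminus\{d_1,d_k\})$ and $\lambda_i=m_i/\phi(d_i)$. Now I would apply the standard multilinear (permutation) expansion of the determinant of a diagonal-plus-zero-diagonal matrix: choosing the diagonal term $\lambda_i$ on a subset $S$ forces those indices to be fixed points and leaves a principal minor of the adjacency part on the complement, so $\det\mathbf{N}=\sum_{\Lambda}\det\mathbf{A}(\Lambda)\prod_{t_i=1}\lambda_i$, which is exactly the bracketed factor $\Phi+\sum_\Lambda\det\mathbf{A}(\Lambda)\lambda_2^{t_2}\cdots\lambda_{k-1}^{t_{k-1}}$ (the $\Phi$ arising from the empty induced subgraph $\Lambda=\varnothing$). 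Finally, collecting $m_1^{\phi(d_1)}m_k^{\phi(d_k)}$ with $\prod\phi(d_i)\cdot\prod m_i^{\phi(d_i)-1}=\Phi^{-1}\prod_{i=2}^{k-1}m_i^{\phi(d_i)}$ and dividing by $n^2$ produces formula \eqref{trees-cyclic}. I expect the main obstacle to be the bookkeeping in the orthogonal reduction step---verifying that every incomparable coupling collapses to a single $\sqrt{\phi(d_i)\phi(d_j)}$ entry and that the tail genuinely decouples---together with correctly matching the combinatorial determinant expansion to the indexing convention ($t_i=0$ on the vertices of $\Lambda$) used in the statement.
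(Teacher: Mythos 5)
Your proposal is correct, and its skeleton coincides with the paper's: both start from Temperley's theorem, peel off the blocks indexed by $d_1=n$ and $d_k=1$ (which decouple because every divisor is comparable to both), reduce the remaining determinant to the $(k-2)\times(k-2)$ matrix $\mathrm{diag}(\lambda_2,\dots,\lambda_{k-1})+\mathbf{A}\bigl(\overline{D}(n)\setminus\{d_1,d_k\}\bigr)$, and finish with the same principal-minor expansion $\det\bigl(\mathrm{diag}(\lambda)+\mathbf{A}\bigr)=\sum_{S}\prod_{i\in S}\lambda_i\det\mathbf{A}[S^c]$. Where you genuinely differ is the middle reduction: the paper collapses each all-ones coupling block by explicit elementary row and column operations (subtracting consecutive columns and adding consecutive rows until each block $\mathbf{J}$ becomes $\phi(d_i)\mathbf{E}_{1,1}+\mathbf{E}_{2,1}+\cdots+\mathbf{E}_{\phi(d_i),1}$, then expanding along the emptied columns), whereas you conjugate by a block-diagonal orthogonal matrix whose $i$-th block has the normalized all-ones vector as its first column, so that each coupling $e_ie_j^{\top}$ collapses to $\sqrt{\phi(d_i)\phi(d_j)}\,\mathbf{E}_{1,1}$ and the remaining coordinates split off as a purely diagonal direct summand. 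Your route is conceptually cleaner: it isolates the one structural fact doing all the work (the couplings are rank one and live in the all-ones direction), keeps the matrix symmetric throughout, and makes the head/tail splitting transparent; the cost is invoking orthogonal completions and square roots, whereas the paper's computation stays entirely within integer matrices and elementary operations at the price of heavier index bookkeeping. One wording nit: your step $\det\widetilde{\mathbf{M}}=\bigl(\prod_i\phi(d_i)\bigr)\det\mathbf{N}$ comes from the two-sided scaling $S^{-1}\widetilde{\mathbf{M}}S^{-1}=\mathbf{N}$, which is a congruence, not a conjugation (a genuine conjugation would leave the determinant unchanged rather than divide it by $\det S^2$); the identity you state is the right one, so nothing breaks.
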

{\em Remark $4$.} At first glance, it seems that Eq.
(\ref{trees-cyclic}) is a very complicated formula. But, it is
worth mentioning that in the right-hand side of Eq.
(\ref{trees-cyclic}), the orders of determinants $\det
\mathbf{A}(\Lambda)$ are at most $k-2$, which is pretty small
compared with the order of determinant
$\det(\mathbf{J}+\mathbf{Q})$ needed to compute the tree-number
$\kappa(\mathbb{Z}_{n})$. After the proof of this theorem, we
will present some examples to illustrate the effectiveness of
this formula.

\begin{proof} First, we illustrate the proof for a special
case and then describe the necessary modifications in the general
case. For instance, we consider the cyclic group
$G=\mathbb{Z}_{12}$. In this situation, we have $$n=12, \ d_1=12,
\ d_2=6, \  d_3=4, \  d_4=3, \  d_5=2, \ d_6=1.$$ Then the matrix
$\mathbf{J}+\mathbf{Q}$ has the form as Eq. (\ref{e3}), and by
expanding the determinant along the $i$-th row, $i=1, 2, \ldots,
\phi(d_1)$ and $n$, we obtain the equivalent expression
\begin{equation}\label{e4}
\det(\mathbf{J}+\mathbf{Q})=m\cdot \det \left[
\begin{array}{ll|ll|ll|l}
 m_2& .&  1& 1& .& .& . \\[0.1cm]
.& m_2&  1& 1& .& .& . \\[0.1cm]
\hline
1& 1&  m_3& .& 1& 1& . \\[0.1cm]
1& 1&  .& m_3& 1& 1& . \\[0.1cm]
\hline
.& .&  1 & 1& m_4& .& 1 \\[0.1cm]
.& .&  1 & 1& .& m_4& 1 \\[0.1cm]
\hline
.& .&  . & .& 1& 1& m_5 \\[0.1cm]
\end{array}\right ],\end{equation}
where $m={m_1}^{\phi(d_1)}\cdot {m_6}^{\phi(d_6)}$.

In order to compute the new determinant on the right-hand side,
denoted by $D$, we apply the following row and column operations:
We subtract column $j$ from column $j+1$, $j=1, 3, 5$, and
subsequently we add row $i+1$ to row $i$, $i=1, 3, 5$. It is not
difficult to see that, stage by stage, the rows and columns are
``emptied" until finally the following determinant
$$D=\prod\limits_{i=2}^{5}{m_i}^{\phi(d_i)}\cdot \det \left[
\begin{array}{cc|cc|cc|c}
 1& .&  \lambda_2^{-1} & .& .& .& . \\[0.1cm]
.& 1&  \frac{1}{m_2} & .& .& .& . \\[0.1cm]
\hline
 \lambda_3^{-1} & . &  1& . & \lambda_3^{-1} & . & . \\[0.1cm]
\frac{1}{m_3} & .&  . & 1& \frac{1}{m_3}& .& . \\[0.1cm]
\hline
.& .&  \lambda_4^{-1} & . & 1 & .& \lambda_4^{-1} \\[0.1cm]
.& .&   \frac{1}{m_4} & .& . & 1 & \frac{1}{m_4} \\[0.1cm]
\hline
.& .&  . & .&  \lambda_5^{-1} & . & 1 \\[0.1cm]
\end{array}\right ]\\[0.3cm]$$
is obtained. Expanding along the columns 2, 4 and 6, it follows
that
$$D=\prod\limits_{i=2}^{5}{m_i}^{\phi(d_i)}\cdot \det \left[
\begin{array}{cccc}
 1& \lambda_2^{-1}& .& . \\[0.1cm]
\lambda_3^{-1} & 1 & \lambda_3^{-1} & .\\[0.1cm]
.& \lambda_4^{-1} & 1 & \lambda_4^{-1} \\[0.1cm]
.&  .& \lambda_5^{-1} & 1 \\[0.1cm]
\end{array}\right ]$$
Taking out the common factors  $\lambda_i^{-1}$ of  $i$-th row,
$i=1, 2, 3, 4$, we obtain
$$\begin{array}{lll} D&=&\Phi^{-1}\prod\limits_{i=2}^{5}{m_i}^{\phi(d_i)}\cdot \det \left[
\begin{array}{cccc}
 \lambda_2 & 1& .& . \\[0.1cm]
1& \lambda_3 & 1 & . \\[0.1cm]
.& 1 & \lambda_4 & 1 \\[0.1cm]
.&  .& 1 & \lambda_5 \\[0.1cm]
\end{array}\right ]\\[1.5cm] &=&\Phi^{-1} \prod\limits_{i=2}^{5}{m_i}^{\phi(d_i)}\cdot
\Big(\Phi+\sum\limits_{\Lambda}
\det\mathbf{A}(\Lambda)\lambda_2^{t_2}
\lambda_3^{t_3}\lambda_4^{t_4} \lambda_5^{t_{5}}\Big),
\end{array}$$
where $t_i\in \{0, 1\}$, $i=2, 3, 4, 5$, and the summation is
over all induced subgraphs $\Lambda$ of
$\overline{D}(12)\setminus \{d_1, d_6\}$ to the vertices
$\{d_{i_1}, \ldots, d_{i_s}\}$ corresponding to
$t_{i_1}=\cdots=t_{i_s}=0$. If this is substituted in Eq.
(\ref{e4}) and the products are put together, then we obtain
$$\det(\mathbf{J}+\mathbf{Q})=\Phi^{-1}\prod_{i=1}^{6}m_i^{\phi(d_i)}\cdot
\Big(\Phi+\sum\limits_{\Lambda}
\det\mathbf{A}(\Lambda)\lambda_2^{t_2}
\lambda_3^{t_3}\lambda_4^{t_4} \lambda_5^{t_{5}}\Big).$$ The
final assertion immediately follows from Theorem \ref{th-1}. This
works in general, as we now demonstrate.

As we mentioned already the matrix $\mathbf{J}+\mathbf{Q}$ has
the form as Eq. (\ref{e2}), and by developing the determinant
several times along the rows $1, 2, \ldots, \phi(d_1)$ and $n$,
one gets
\begin{equation}\label{e5}
\det(\mathbf{J}+\mathbf{Q})=m\cdot \det \left[
\begin{array}{cccc}
D_{22} & D_{23} & \ldots & D_{2, k-1} \\[0.1cm]
D_{32} & D_{33} & \ldots & D_{3, k-1} \\[0.1cm]
\vdots & \vdots & \ddots & \vdots \\[0.1cm]
D_{k-1, 2} & D_{k-1, 3} & \ldots & D_{k-1, k-1} \\[0.1cm]
\end{array}\right ],\end{equation}
where $m={m_1}^{\phi(d_1)}\cdot {m_k}^{\phi(d_k)}$.

In what follows, $D$ denotes the new determinant on the
right-hand side of Eq. (\ref{e5}). In order to compute this
determinant, we apply the following row and column operations: We
subtract column $j$ from column $j+r$:
$$\left\{\begin{array}{ll} j=1+\sum\limits_{l=2}^{h}\phi(d_l), \ h=1,2, \ldots, k-2,\\[0.3cm]
r=1, 2, \ldots, \phi(d_{h+1})-1,\end{array} \right.$$ and
subsequently we add row $i+s$ to row $i$:
$$\left\{\begin{array}{ll} i=1+\sum\limits_{l=2}^{h}\phi(d_l), \ h=1,2, \ldots, k-2,\\[0.3cm]
s=1, 2, \ldots, \phi(d_{h+1})-1.\end{array} \right.$$ (Note that,
when $m>n$, we assume that $\sum_{i=m}^na_i=0$). Using the above
operations, it is easy to see that
$$D=\det \left[
\begin{array}{cccc}
M_{22} & M_{23} & \ldots & M_{2, k-1} \\[0.1cm]
M_{32} & M_{33} & \ldots & M_{3, k-1} \\[0.1cm]
\vdots & \vdots & \ddots & \vdots \\[0.1cm]
M_{k-1, 2} & M_{k-1, 3} & \ldots & M_{k-1, k-1} \\[0.1cm]
\end{array}\right ],$$
where $M_{ij}$ is a matrix of size $\phi(d_i)\times \phi(d_j)$
with
$$ M_{ij}=\left\{\begin{array}{ll} m_i\mathbf{I} &  \mbox{if} \  \  i=j,\\[0.2cm]
0 &  \mbox{if} \ \  i\neq j, \ \ d_i|d_j \  \mbox{or} \ d_j|d_i,\\[0.2cm]
\phi(d_i)\mathbf{E}_{1, 1}+\mathbf{E}_{2,
1}+\cdots+\mathbf{E}_{\phi(d_i), 1} & \mbox{otherwise},
\end{array} \right.$$
where $\mathbf{I}$ is the identity matrix and $\mathbf{E}_{i,j}$
denotes the square matrix having $1$ in the $(i,j)$ position and
$0$ elsewhere.

Therefore, taking out the common factors and developing the
determinant along the columns $j$, with
$$j \neq 1+\sum\limits_{l=2}^{h}\phi(d_l), \ \ \  h=1,2, \ldots, k-2,$$ one
gets
\begin{equation}\label{e6} D=\Phi^{-1}\prod_{i=2}^{k-1}{m_i}^{\phi(d_i)}\cdot\det
\left[
\begin{array}{cccc}
a_{22} & a_{23} & \ldots & a_{2, k-1} \\[0.1cm]
a_{32} & a_{33} & \ldots & a_{3, k-1} \\[0.1cm]
\vdots & \vdots & \ddots & \vdots \\[0.1cm]
a_{k-1, 2} & a_{k-1, 3} & \ldots & a_{k-1, k-1} \\[0.1cm]
\end{array}\right ],\end{equation}
where $$ a_{ij}=\left\{\begin{array}{cl}\lambda_i &  \mbox{if} \  \  i=j,\\[0.2cm]
0 &  \mbox{if} \ \  i\neq j, \ \ d_i|d_j \  \mbox{or} \ d_j|d_i,\\[0.2cm]
1 & \mbox{otherwise}.
\end{array} \right.$$
As the reader might have noticed, the following matrix
$$\left[
\begin{array}{cccc}
0 & a_{23} & \ldots & a_{2, k-1} \\[0.1cm]
a_{32} & 0 & \ldots & a_{3, k-1} \\[0.1cm]
\vdots & \vdots & \ddots & \vdots \\[0.1cm]
a_{k-1, 2} & a_{k-1, 3} & \ldots & 0 \\[0.1cm]
\end{array}\right],$$
is exactly the adjacency matrix of the graph
$\Gamma=\overline{D}(n)\setminus \{d_1, d_k\}$. Consequently, we
get
$$\det \left[
\begin{array}{cccc}
\lambda_2 & a_{23} & \ldots & a_{2, k-1} \\[0.1cm]
a_{32} & \lambda_3 & \ldots & a_{3, k-1} \\[0.1cm]
\vdots & \vdots & \ddots & \vdots \\[0.1cm]
a_{k-1, 2} & a_{k-1, 3} & \ldots & \lambda_{k-1} \\[0.1cm]
\end{array}\right ]=\Phi+\sum_{\Lambda}
\det\mathbf{A}(\Lambda)\lambda_2^{t_2}\lambda_3^{t_3}\cdots
\lambda_{k-1}^{t_{k-1}},$$ where $t_i\in \{0, 1\}$, $i=2, 3,
\ldots, k-1$, and the summation is over all induced subgraphs
$\Lambda$ of $\Gamma$ to the vertices $\{d_{i_1}, \ldots,
d_{i_s}\}$ corresponding to $t_{i_1}=\cdots=t_{i_s}=0$. This is
substituted in Eq. (\ref{e6}):
$$D=\Phi^{-1}\prod_{i=2}^{k-1}{m_i}^{\phi(d_i)}\cdot \Big(\Phi+\sum_{\Lambda}
\det\mathbf{A}(\Lambda)\lambda_2^{t_2}\lambda_3^{t_3}\cdots
\lambda_{k-1}^{t_{k-1}}\Big).$$ Again, if this is substituted in
Eq. (\ref{e5}) and the sums are put together, then we obtain
$$\det(\mathbf{J}+\mathbf{Q})=\Phi^{-1}\prod_{i=1}^{k}{m_i}^{\phi(d_i)}\cdot \Big(\Phi+\sum_{\Lambda}
\det\mathbf{A}(\Lambda)\lambda_2^{t_2}\lambda_3^{t_3}\cdots
\lambda_{k-1}^{t_{k-1}}\Big).$$ The required result now follows
immediately from Theorem \ref{th-1}.
\end{proof}

A computer check has confirmed that our expression is correct for
all values of $n$ up to 100 and so we are confident in the
correctness of our results.
\begin{corollary}\label{corollary-202}
Let $n>2$ be an integer. Then $\kappa(\mathbb{Z}_n)$ is divisible
by $n$.
\end{corollary}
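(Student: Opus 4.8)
The plan is to read the divisibility directly off the closed form in Theorem \ref{result2}. The crucial structural observation is that in $\mathcal{P}(\mathbb{Z}_n)$ both the $\phi(n)$ generators (the elements of order $d_1=n$) and the identity (the unique element of order $d_k=1$) are adjacent to every other vertex: a generator $x$ satisfies $\langle y\rangle\subseteq\langle x\rangle$ for all $y$, while the identity satisfies $\langle 1\rangle\subseteq\langle y\rangle$ for all $y$. Hence all these vertices have degree $n-1$, so $m_1=m_k=n$ and
$$m_1^{\phi(d_1)}m_k^{\phi(d_k)}=n^{\phi(n)}\cdot n=n^{\phi(n)+1}.$$
First I would isolate this factor $n^{\phi(n)+1}$ inside the product $\prod_{i=1}^{k}m_i^{\phi(d_i)}$ appearing in Eq.~(\ref{trees-cyclic}); dividing by the $n^2$ already present in the formula leaves $n^{\phi(n)-1}$, which is the source of the claimed divisibility.

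The next step is to certify that, once $n^{\phi(n)-1}$ is extracted, the remaining factor is an \emph{integer}. Here I would use the fact established in the proof of Theorem \ref{result2} that the bracket $\Phi+\sum_{\Lambda}\det\mathbf{A}(\Lambda)\lambda_2^{t_2}\cdots\lambda_{k-1}^{t_{k-1}}$ equals the determinant of the $(k-2)\times(k-2)$ matrix $B$ whose diagonal entries are $\lambda_2,\ldots,\lambda_{k-1}$ and whose off-diagonal part is the adjacency matrix of $\Gamma=\overline{D}(n)\setminus\{d_1,d_k\}$. Scaling the $i$th row of $B$ by $\phi(d_i)$ turns each diagonal entry $\lambda_i=m_i/\phi(d_i)$ into the integer $m_i$ and each off-diagonal entry into $\phi(d_i)$ times a $0/1$ entry, producing an integer matrix $\widehat{B}$ with $\det\widehat{B}=\big(\prod_{i=2}^{k-1}\phi(d_i)\big)\det B$. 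Substituting $\det B=\det\widehat{B}/\prod_{i=2}^{k-1}\phi(d_i)$ into Eq.~(\ref{trees-cyclic}) and using $\Phi\prod_{i=2}^{k-1}\phi(d_i)=\prod_{i=2}^{k-1}m_i$, all rational quantities cancel and I would arrive at
$$\kappa(\mathbb{Z}_n)=n^{\phi(n)-1}\Big(\prod_{i=2}^{k-1}m_i^{\phi(d_i)-1}\Big)\det\widehat{B}.$$

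Since $\widehat{B}$ is an integer matrix and each exponent $\phi(d_i)-1$ is non-negative, the factor $\big(\prod_{i=2}^{k-1}m_i^{\phi(d_i)-1}\big)\det\widehat{B}$ is an integer, so $\kappa(\mathbb{Z}_n)=n^{\phi(n)-1}\cdot(\text{integer})$. For $n>2$ one has $\phi(n)\geqslant 2$, hence $\phi(n)-1\geqslant 1$ and $n\mid\kappa(\mathbb{Z}_n)$, as required; the hypothesis $n>2$ is genuinely needed, since for $n=2$ we have $\phi(n)-1=0$ and indeed $\kappa(\mathbb{Z}_2)=1$. I expect the only delicate point to be the bookkeeping in this cancellation: one must check that the row-scaling by the $\phi(d_i)$ exactly clears the rational entries $\lambda_i$ together with the factor $\Phi$, so that the non-$n$ part is provably an integer rather than merely a rational number. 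Everything else is a direct reading of Theorem \ref{result2}, and a quick sanity check on the case $n=p$ prime (where $k=2$, the middle range is empty, $\widehat{B}$ is the empty matrix with determinant $1$, and the formula collapses to $\kappa(\mathbb{Z}_p)=p^{\phi(p)-1}=p^{p-2}=\kappa(K_p)$) confirms that the extracted exponent is correct.
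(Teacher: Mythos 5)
Your proof is correct and takes essentially the same approach as the paper: both arguments extract the factor $m_1^{\phi(d_1)}m_k^{\phi(d_k)}/n^2=n^{\phi(n)+1}/n^2=n^{\phi(n)-1}$ from Eq.~(\ref{trees-cyclic}) and conclude from $\phi(n)\geqslant 2$ for $n>2$. The only difference is one of care: the paper merely asserts that the remaining cofactor is an integer (implicitly because it equals the determinant $D$ of an integer matrix in the proof of Theorem \ref{result2}), whereas your row-scaling construction of the integer matrix $\widehat{B}$ makes that integrality explicit --- a refinement of, not a departure from, the paper's argument.
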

\begin{proof}
Using the above notation and Theorems \ref{result2}, we can see
that $\kappa(\mathbb{Z}_n)$ is divisible by
$$\frac{{m_1}^{\phi(d_1)}{m_k}^{\phi(d_k)}}{n^2}=\frac{n^{\phi(n)}n^{\phi(1)}}{n^2}=n^{\phi(n)-1},$$
and since $\phi(n)\geqslant 2$ for all $n>2$, the result is
proved.
\end{proof}

In what follows, we assume that
$$\gamma_i:=\frac{n_i}{\phi(d_i)}, \ i=2, \ldots, k-1, \ \
\mbox{and} \ \ \Psi:=\prod_{i=2}^{k-1}\gamma_i.$$

We will denote by $\mathcal{P}(G^\#)$ the graph obtained by
deleting the vertex 1 (the identity element of $G$) from
$\mathcal{P}(G)$. For convenience, we will denote
$\kappa(\mathcal{P}(G^\#))$ as $\kappa(G^\#)$. In the same manner
as in the proof of Theorem \ref{result2}, we can prove the
following theorem.
\begin{theorem}\label{result-22} Let $n$ be a positive integer and $d_1>d_2>\cdots>d_k$ all divisors of
$n$. With the above notation, there holds
\begin{equation}\label{eq-new}
\kappa(\mathbb{Z}_n^\#)=\prod_{i=1}^{k-1}n_i^{\phi(d_i)}\Big(\Psi+
\sum_{\Lambda} \det\mathbf{A}(\Lambda)\gamma_2^{t_2}
\gamma_3^{t_3}\cdots \gamma_{k-1}^{t_{k-1}}\Big)/(\Psi
(n-1)^2),\end{equation} where $t_i\in \{0, 1\}$, $i=2, 3, \ldots,
k-1$, and the summation is over all induced subgraphs $\Lambda$ of
$\overline{D}(n)\setminus \{d_1, d_k\}$ to the vertices
$\{d_{i_1}, \ldots, d_{i_s}\}$ corresponding to
$t_{i_1}=\cdots=t_{i_s}=0$.
\end{theorem}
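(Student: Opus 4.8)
The plan is to run the argument of Theorem \ref{result2} almost verbatim, keeping track of the two changes forced by the deletion of the identity vertex $1$ from $\mathcal{P}(\mathbb{Z}_n)$. Since $1$ was adjacent to every other element of $G$, each surviving vertex loses exactly one neighbour, so an element of order $d_i$ now has degree $n_i-1$ in $\mathcal{P}(\mathbb{Z}_n^{\#})$. Writing $\mathbf{Q}$ for the Laplacian of this smaller graph, the diagonal entries of $\mathbf{J}+\mathbf{Q}$ therefore equal $(n_i-1)+1=n_i$, so the diagonal blocks are $n_i\mathbf{I}$ in place of the earlier $m_i\mathbf{I}$; this is exactly why the statement is phrased in terms of $\gamma_i=n_i/\phi(d_i)$ and $\Psi=\prod_{i=2}^{k-1}\gamma_i$ rather than $\lambda_i$ and $\Phi$. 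The second change is that the block indexed by $d_k=1$ is simply gone, so $\mathbf{J}+\mathbf{Q}$ is now a $(k-1)\times(k-1)$ block matrix on the vertex classes $d_1,\ldots,d_{k-1}$.

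The key observation is that deleting $1$ removes only the edges incident to $1$ and leaves all other adjacencies intact. Hence for $1\le i\ne j\le k-1$ the off-diagonal blocks $D_{ij}$ keep their former description: $D_{ij}=0$ when $d_i\mid d_j$ or $d_j\mid d_i$, and $D_{ij}=\mathbf{J}$ otherwise. I would then expand the determinant along the rows of the generator class $d_1$. Because $d_1=n$ is divisible by every $d_j$, all blocks $D_{1j}$ vanish, so this class still decouples and the expansion extracts the factor $n_1^{\phi(d_1)}=(n-1)^{\phi(n)}$. Here lies the one genuine structural difference from Theorem \ref{result2}: there both the $d_1$- and the $d_k$-classes split off, whereas now only $d_1$ does, which is precisely why the outer product in Eq. (\ref{eq-new}) is the single product $\prod_{i=1}^{k-1}n_i^{\phi(d_i)}$.

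The residual determinant $D$ is indexed by $d_2,\ldots,d_{k-1}$ and is formally identical to the one in the proof of Theorem \ref{result2} after the substitution $m_i\mapsto n_i$. I would apply the same column-then-row operations (subtract each column from its successors inside a block, then add the corresponding rows) to empty the repeated rows and columns, reducing $D$ to
$$
D=\Psi^{-1}\prod_{i=2}^{k-1}n_i^{\phi(d_i)}\cdot\det\left[
\begin{array}{cccc}
\gamma_2 & a_{23} & \ldots & a_{2,k-1}\\
a_{32} & \gamma_3 & \ldots & a_{3,k-1}\\
\vdots & \vdots & \ddots & \vdots\\
a_{k-1,2} & a_{k-1,3} & \ldots & \gamma_{k-1}
\end{array}\right],
$$
where the $a_{ij}\in\{0,1\}$ are the entries of the adjacency matrix of $\Gamma=\overline{D}(n)\setminus\{d_1,d_k\}$. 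The same multilinearity expansion of $\det(\mathrm{diag}(\gamma_i)+\mathbf{A}(\Gamma))$ used before then gives $\Psi+\sum_{\Lambda}\det\mathbf{A}(\Lambda)\gamma_2^{t_2}\cdots\gamma_{k-1}^{t_{k-1}}$, the sum running over the induced subgraphs $\Lambda$ of $\Gamma$ on the vertices corresponding to the indices with $t_i=0$.

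Collecting the factors gives $\det(\mathbf{J}+\mathbf{Q})=\Psi^{-1}\prod_{i=1}^{k-1}n_i^{\phi(d_i)}\bigl(\Psi+\sum_{\Lambda}\det\mathbf{A}(\Lambda)\gamma_2^{t_2}\cdots\gamma_{k-1}^{t_{k-1}}\bigr)$, and Temperley's theorem (Theorem \ref{th-1}), applied to the $(n-1)$-vertex graph $\mathcal{P}(\mathbb{Z}_n^{\#})$ so that one divides by $(n-1)^2$ instead of $n^2$, yields Eq. (\ref{eq-new}). I expect no real obstacle: the whole computation transcribes the earlier one. The only points needing care are confirming that every surviving vertex's degree drops by exactly $1$ (which produces the clean $m_i\mapsto n_i$ substitution) and the bookkeeping that only the generator class---not the now-absent identity class---is peeled off before the row and column reduction.
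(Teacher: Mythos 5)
Your proposal is correct and takes exactly the route the paper intends: the paper gives no separate proof of this theorem, saying only that it follows ``in the same manner as in the proof of Theorem \ref{result2}'', and your transcription supplies precisely the needed modifications. The three points you track --- diagonal blocks $n_i\mathbf{I}$ in place of $m_i\mathbf{I}$ because every surviving vertex loses the single edge to the identity, peeling off only the generator class $d_1$ since the class of $d_k=1$ is absent (whence the product $\prod_{i=1}^{k-1}n_i^{\phi(d_i)}$), and applying Temperley's formula with divisor $(n-1)^2$ --- are exactly the bookkeeping changes required, and the residual determinant computation carries over verbatim under $m_i\mapsto n_i$, $\lambda_i\mapsto\gamma_i$, $\Phi\mapsto\Psi$.
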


{\em Some Examples.} (1) Suppose that $n=p^f$, where $p$ is a
prime number and $f\in \mathbb{N}$. According to Lemma
\ref{complete-CSS}, the power graph $\mathcal{P}(\mathbb{Z}_n)$
is a complete graph and by Cayley result we obtain
$$\kappa(\mathbb{Z}_{n})=n^{n-2}.$$ Alternatively, we may use
Theorem \ref{result2}. With the notation as in Theorem
\ref{result2}, we have

$\bullet$  \  $k=f+1$ and $d_i=p^{f-i+1}$,  $i=1, 2, \ldots, f+1$;

$\bullet$  \  for every $i$, $m_i=n$;

$\bullet$  \ The divisor graph $D(n)$ is a complete graph, and so
$\overline{D}(n)$ is a null graph.

\noindent Substituting the above expressions into Eq.
(\ref{trees-cyclic}), we obtain
$$\begin{array}{lll} \kappa(\mathbb{Z}_{n}) & = & \prod\limits_{i=1}^{f+1}n^{\phi(p^{f-i+1})}\Big(\Phi+
\sum\limits_{\Lambda} \det\mathbf{A}(\Lambda)\lambda_2^{t_2}
\lambda_3^{t_3}\cdots \lambda_{f}^{t_{f}}\Big)/(\Phi n^2)\\[0.3cm]
& = & n \prod\limits_{i=1}^{f}n^{\phi(p^{f-i+1})}\big(\Phi+
0\big)/(\Phi n^2)\\[0.3cm]
& = & \frac{1}{n}\prod\limits_{i=1}^{f}n^{p^{f-i}(p-1)}\\[0.3cm] & = &
\frac{1}{n}n^{(p-1)\sum\limits_{i=1}^{f}p^{f-i}}
=\frac{1}{n}n^{p^f-1}=n^{n-2},

\end{array}$$
as required.

$(2)$ Suppose that $n=pq$, where $p<q$ are primes. Once again,
with the notation as in Theorem \ref{result2}, we have

$\bullet$  \  $d_1=n$, $d_2=q$, $d_3=p$ and $d_4=1$;

$\bullet$  \  By Lemma \ref{pre-1}, we obtain $$m_1=n, \ \
m_2=n-p+1, \ \ m_3=n-q+1 \ \ \mbox{and} \ \ m_4=n;$$

$\bullet$  \ The divisor graph $D(n)\setminus\{d_1, d_4\}$ is a
null graph, and so $\overline{D}(n)\setminus \{d_1, d_4\}$ is a
complete graph.

\noindent Substituting the above expressions into Eq.
(\ref{trees-cyclic}), we obtain
$$\begin{array}{lll} \kappa(\mathbb{Z}_{n}) & = & \prod\limits_{i=1}^{4}{m_i}^{\phi(d_i)}\Big(\Phi+
\sum\limits_{\Lambda} \det\mathbf{A}(\Lambda)\lambda_2^{t_2}
\lambda_3^{t_3}\Big)/(\Phi n^2)\\[0.3cm]
& = & \prod\limits_{i=1}^{3}{m_i}^{\phi(d_i)}\big(\Phi-1\big)/(\Phi n)\\[0.5cm]
& = &
n^{(p-1)(q-1)}(n-p+1)^{q-2}(n-q+1)^{p-2}(n-p-q+2).\end{array}$$

Similarly, substituting the above expressions into Eq.
(\ref{eq-new}), we obtain
$$\begin{array}{lll} \kappa(\mathbb{Z}_{n}^{\#}) & = & \prod\limits_{i=1}^{3}{n_i}^{\phi(d_i)}\Big(\Psi+
\sum\limits_{\Lambda} \det\mathbf{A}(\Lambda)\lambda_2^{t_2}
\lambda_3^{t_3}\Big)/(\Psi (n-1)^2)\\[0.3cm]
& = & \prod\limits_{i=1}^{3}{m_i}^{\phi(d_i)}\big(\Psi-1\big)/(\Psi (n-1)^2)\\[0.5cm]
& = &
(n-1)^{(p-1)(q-1)-1}(n-p)^{q-2}(n-q)^{p-2}(n-p-q+1).\end{array}$$
In particular, we have
$$\kappa(\mathbb{Z}_{2p})=\frac{1}{2}(2p)^p(2p-1)^{p-2} \ \
\mbox{and} \ \
\kappa(\mathbb{Z}_{2p}^\#)=\frac{1}{2}(2p-1)^{p-2}(2p-2)^{p-1},$$
where $p$ is an odd prime. For instance, if $n=6$, then easy
computations show that $\kappa(\mathbb{Z}_{6})=540$ and
$\kappa(\mathbb{Z}_{6}^\#)=40$. The power graph
$\mathcal{P}(\mathbb{Z}_{6})$ is depicted in Fig. 2.

\setlength{\unitlength}{3mm}
\begin{picture}(0,0)(-8,12)
\linethickness{0.7pt} %
\put(0.04,10){\circle*{0.3}}%
\put(3,10){\circle*{0.3}}%
\put(-1.2,7.5){\circle*{0.3}}%
\put(4.2,7.5){\circle*{0.3}}%
\put(1.5,5.5){\circle*{0.3}}%
\put(1.5,2.1){\circle*{0.3}}%
\put(0,10){\line(1,0){3}}%
\put(-1.2,7.5){\line(1,2){1.2}}%
\put(-1.2,7.5){\line(4,-3){2.7}}%
\put(3,10){\line(1,-2){1.2}}%
\put(1.5,5.5){\line(4,3){2.7}}%
\put(1.5,5.5){\line(1,3){1.5}}%
\put(1.5,5.5){\line(-1,3){1.5}}%
\put(-1.2,7.5){\line(1,0){5.3}}%
\put(-1.2,7.5){\line(5,3){4.3}}%
\put(4.2,7.5){\line(-1,-2){2.7}}%
\put(4.2,7.5){\line(-5,3){4.2}}%
\put(-1.2,7.5){\line(1,-2){2.7}}%
\put(1.5,2.1){\line(0,1){3.3}}%
\put(-0.3,10.5){\small$x^2$}%
\put(3,10.5){\small$x^4$}%
\put(-2.3,7.3){\small$x$}%
\put(4.8,7.3){\small$x^5$}%
\put(0.8,4.8){\small$1$}%
\put(1,0.7){\small$x^3$}%
\put(15.04,10){\circle*{0.3}}%
\put(18,10){\circle*{0.3}}%
\put(13.8,7.5){\circle*{0.3}}%
\put(19.2,7.5){\circle*{0.3}}%
\put(16.5,5.5){\circle*{0.3}}%
\put(16.5,2.1){\circle*{0.3}}%
\put(14.8,2.1){\circle*{0.3}}%
\put(13.1,2.1){\circle*{0.3}}%
\put(18.2,2.1){\circle*{0.3}}%
\put(19.9,2.1){\circle*{0.3}}%
\put(16,10){$\ldots$}%
\put(18.4,2){$\ldots$}%
\put(13.8,7.5){\line(1,2){1.2}}%
\put(13.8,7.5){\line(4,-3){2.7}}%
\put(18,10){\line(1,-2){1.2}}%
\put(16.5,5.5){\line(4,3){2.7}}%
\put(16.5,5.5){\line(1,3){1.5}}%
\put(16.5,5.5){\line(-1,3){1.5}}%
\put(13.8,7.5){\line(1,0){5.3}}%
\put(13.8,7.5){\line(5,3){4.3}}%
\put(19.2,7.5){\line(-5,3){4.2}}%
\put(16.5,2.1){\line(0,1){3.4}}%
\put(16.5,5.5){\line(-1,-2){1.75}}%
\put(16.5,5.5){\line(1,-2){1.75}}%
\put(16.5,5.5){\line(-1,-1){3.4}}%
\put(16.5,5.5){\line(1,-1){3.4}}%
\put(14.7,10.5){\small $x^2$}%
\put(18,10.5){\small $x^{n-2}$}%
\put(12.7,7.3){\small $x$}%
\put(19.8,7.3){\small $x^{n-1}$}%
\put(17.3,5.2){\small $1$}%
\put(13.8,0.8){\small $xy$}%
\put(12.3,0.8){\small $y$}%
\put(15.6,0.8){\small $x^2y$}%
\put(17.6,0.8){\small $x^3y$}%
\put(19.8,0.8){\small $x^{n-1}y$}%
\put(11,-1){{\small {\bf Fig. 3.} The power graph
$\mathcal{P}(D_{2n})$.}} \put(-6,-1){{\small {\bf Fig. 2.} The
power graph $\mathcal{P}(\mathbb{Z}_{6})$.}}
\end{picture}\\[4cm]

(3) Let $D_{2n}$ be the dihedral group of order $2n$, which is
defined by
$$D_{2n}=\langle x, y \ | \ x^{n}=y^2=1, x^y=x^{-1}\rangle.$$
The power graph $\mathcal{P}(D_{2n})$ is depicted in Fig. 3. It
is easy to see that, the edges $\{1, x^iy\},  \ \ i=0, 1, \ldots,
n-1,$ are cut-edges and by Theorem \ref{th-2}, we obtain
$\kappa(D_{2n})=\kappa(\mathbb{Z}_n)$.

We conclude this section with the following corollary, which
derived from before observations.
\begin{corollary} Suppose that $|G|=pq$, where $p$ and $q$ are
primes (not necessarily distinct).  Then one of the following
possibilities must occurs:
\begin{itemize}
\item[{$(a)$}] $p=q$, $G\cong \mathbb{Z}_{p^2}$, and
$\kappa(G)=p^{2(p^2-2)}$.
\item[{$(b)$}] $p=q$, $G\cong \mathbb{Z}_{p}\times
\mathbb{Z}_{p}$, and $\kappa(G)=p^{(p+1)(p-2)}$.
\item[{$(c)$}] $p\neq q$, $G\cong \mathbb{Z}_{p}\times
\mathbb{Z}_{q}\cong \mathbb{Z}_{pq}$, and
$$\kappa(G)=n^{(p-1)(q-1)}(n-p+1)^{q-2}(n-q+1)^{p-2}(n-p-q+2),$$
where $n=pq$.
\item[{$(d)$}] $q<p$, $p\equiv 1\pmod{q}$, $G\cong \mathbb{Z}_{p}\rtimes
\mathbb{Z}_{q}$, and $\kappa(G)=p^{p-2}q^{p(q-2)}$.
\end{itemize}
\end{corollary}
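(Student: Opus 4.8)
The plan is to split the statement into two independent tasks: first, the purely group-theoretic classification of all groups of order $pq$ up to isomorphism, which yields exactly the four listed structures; and second, the evaluation of $\kappa(G)$ in each case by invoking results already established above.

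For the classification I would distinguish according to whether $p=q$. If $p=q$, then $|G|=p^2$ forces $G$ to be abelian, so $G$ is either cyclic $\mathbb{Z}_{p^2}$ or elementary abelian $\mathbb{Z}_p\times\mathbb{Z}_p$; these are precisely cases $(a)$ and $(b)$. If $p\neq q$, say $q<p$, I would apply Sylow's theorems: the number $n_p$ of Sylow $p$-subgroups divides $q$ and is congruent to $1$ modulo $p$, and since $q<p$ this forces $n_p=1$, so the Sylow $p$-subgroup $P\cong\mathbb{Z}_p$ is normal. The number $n_q$ of Sylow $q$-subgroups divides $p$ and satisfies $n_q\equiv 1\pmod q$, hence $n_q\in\{1,p\}$. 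If $n_q=1$ then both Sylow subgroups are normal and $G\cong\mathbb{Z}_p\times\mathbb{Z}_q\cong\mathbb{Z}_{pq}$, giving case $(c)$; if $n_q=p$ then necessarily $p\equiv 1\pmod q$ and $G\cong\mathbb{Z}_p\rtimes\mathbb{Z}_q$ is the unique non-abelian group of order $pq$, giving case $(d)$.

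For the tree-numbers, each case reduces to an earlier computation. In case $(a)$, Lemma \ref{complete-CSS} shows $\mathcal{P}(\mathbb{Z}_{p^2})$ is complete on $p^2$ vertices, so Cayley's formula gives $\kappa(G)=(p^2)^{p^2-2}=p^{2(p^2-2)}$. In case $(b)$, $G$ is elementary abelian of order $p^2$, so Eq. (\ref{equation-1}) with exponent $\frac{p^2-1}{p-1}=p+1$ yields $\kappa(G)=p^{(p+1)(p-2)}$. Case $(c)$ is exactly the formula derived in Example $(2)$ following Theorem \ref{result-22}, namely the specialization of Eq. (\ref{trees-cyclic}) to $n=pq$; since that formula is symmetric under interchanging $p$ and $q$, it applies regardless of which prime is larger.

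The one case requiring a small observation is $(d)$. Here I would first verify that $G=\mathbb{Z}_p\rtimes\mathbb{Z}_q$ is an EPO-group: counting elements gives $1+(p-1)+p(q-1)=pq$, confirming $\omega(G)=\{1,p,q\}$, so every nonidentity element has prime order. The number of cyclic subgroups of order $p$ is $c_p=1$ (the unique normal Sylow $p$-subgroup), while the number of cyclic subgroups of order $q$ is $c_q=n_q=p$. Corollary \ref{cor-epo} then gives
$$\kappa(G)=p^{(p-2)c_p}q^{(q-2)c_q}=p^{p-2}q^{p(q-2)},$$
as claimed. There is no genuine obstacle here; the only point demanding care is to keep the two roles of $p$ and $q$ straight when reading off the cyclic-subgroup counts in case $(d)$ and when matching the symmetric cyclic formula of case $(c)$ to the earlier example.
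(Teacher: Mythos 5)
Your proposal is correct and takes essentially the same route as the paper: the paper offers no separate proof, presenting the corollary as ``derived from before observations,'' namely the complete-graph/Cayley computation for $\mathbb{Z}_{p^2}$, Eq.~(\ref{equation-1}) for elementary abelian groups, the $\mathbb{Z}_{pq}$ example following Theorem \ref{result-22}, and the EPO-group formula of Corollary \ref{cor-epo} for the non-abelian case, which is exactly the chain of citations you assemble on top of the standard Sylow classification of groups of order $pq$.
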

\section{The Tree-number of $\mathcal{P}(Q_{4n})$}
In this section, we focus our attention on the generalized
quaternion groups. Let $Q_{4n}$ denote the generalized quaternion
group of order $4n$, which can be presented by
$$Q_{4n}=\langle x, y \ | \ x^{2n}=1, y^2=x^n, x^y=x^{-1}\rangle.$$
First of all, we recall that every element in $Q_{4n}$ can be
written uniquely as $x^iy^j$ where $0\leqslant i\leqslant 2n-1$
and $0\leqslant j\leqslant 1$, and so the order of $Q_{4n}$ is
exactly $4n$. Moreover, for a natural number $n$, the graph
$\mathcal{P}(Q_{4n}^\#)$ consists of the power graph
$\mathcal{P}(\mathbb{Z}_{2n}^\#)$ and $n$ triangles sharing a
common vertex (i.e., $x^n$). Indeed $x^n$ is a {\em cut vertex}
in $\mathcal{P}(Q_{4n}^\#)$.

It is well known that the group $Q_{4n}$ has a unique minimal
subgroup if and only if $n$ is a power of $2$. Furthermore, in
the case when $n$ is a power of $2$, the graph
$\mathcal{P}(Q_{4n}^\#)$ has the following form:
$$\mathcal{P}(Q_{4n}^\#)=K_1\vee (K_{2n-2}\oplus
\underbrace{K_2\oplus K_2\oplus \cdots \oplus K_2}_{n {\rm
-times}}).$$ As a matter of fact, it consists of a complete graph
on $2n-1$ vertices and $n$ triangles sharing a common vertex (the
unique involution, i.e., $x^n$). The graph
$\mathcal{P}(Q_{4n}^\#)$, for the case $n$ is a power of 2, is
depicted in Fig. 4. Note that, for every element $g\in
Q_{4n}\setminus \{1\}$, the subgroup $\langle g \rangle$ contains
the unique involution $x^n$, and so $x^n\sim g$ in
$\mathcal{P}(Q_{4n}^\#)$, hence in $\mathcal{P}(Q_{4n}^\#)$ the
unique involution $x^n$ is the {\em only} vertex of degree $4n-2$.

\begin{theorem}\label{result-333}
If $n$ is a natural number, then the tree-number of the power
graph $\mathcal{P}(Q_{4n}^\#)$ is given by the formula
$$\kappa(Q_{4n}^\#)=3^n\cdot \kappa(\mathbb{Z}_{2n}^\#).$$
\end{theorem}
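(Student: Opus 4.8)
The plan is to reduce the whole computation to a single application of the cut-vertex multiplication rule, Theorem~\ref{th-222}, with the unique involution $v:=x^n$ playing the role of the cut vertex. The first step is to determine the local structure of $\mathcal{P}(Q_{4n}^\#)$ around $v$. Every element $g\in Q_{4n}\setminus\langle x\rangle$ satisfies $g^2=x^n$, so $g$ has order $4$ and $\langle g\rangle=\{1,g,x^n,g^{-1}\}$. No cyclic subgroup properly contains $\langle g\rangle$, since an element of order exceeding $4$ must lie in $\langle x\rangle$, whereas $g\notin\langle x\rangle$. Checking the two possible inclusions $\langle h\rangle\subseteq\langle g\rangle$ and $\langle g\rangle\subseteq\langle h\rangle$ then shows that the only neighbours of $g$ in $\mathcal{P}(Q_{4n})$ are $1$, $x^n$ and $g^{-1}$. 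Hence in $\mathcal{P}(Q_{4n}^\#)$ the vertex $g=x^iy$ is adjacent precisely to $x^n$ and to its inverse $x^{n+i}y$.

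From this I read off the components of $\mathcal{P}(Q_{4n}^\#)-v$. The $2n$ elements $x^iy$ fall into the $n$ inverse-pairs $\{x^iy,\,x^{n+i}y\}$ with $i=0,1,\ldots,n-1$, and after deleting $v$ each such pair becomes an isolated edge $E_i\cong K_2$, since no edge of $\mathcal{P}(Q_{4n}^\#)$ joins $x^iy$ to a power of $x$ or to a vertex of another pair. The remaining vertices are the non-identity, non-involution powers of $x$, whose induced subgraph is $C_0=\mathcal{P}(\mathbb{Z}_{2n}^\#)-v$. For $n\geqslant 2$ this $C_0$ is connected, because a generator of $\langle x\rangle$ has order $2n>2$, survives the deletion of $v$, and is adjacent to every other surviving power of $x$ (the case $n=1$, where $Q_4\cong\mathbb{Z}_4$, is trivial). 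Therefore $v=x^n$ is a cut vertex with
$$\mathcal{P}(Q_{4n}^\#)-v=C_0\oplus E_0\oplus E_1\oplus\cdots\oplus E_{n-1}.$$

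The final step is to glue $v$ back to each component and invoke Theorem~\ref{th-222}. Re-attaching $v$ to $C_0$ reinstates exactly its edges to the even-order powers of $x$, so $\widetilde{C_0}=\mathcal{P}(\mathbb{Z}_{2n}^\#)$ and this piece contributes the factor $\kappa(\mathbb{Z}_{2n}^\#)$. Re-attaching $v$ to $E_i$ produces a triangle, since $x^n$ is adjacent to both endpoints $x^iy$ and $x^{n+i}y$; thus $\widetilde{E_i}\cong K_3$ and $\kappa(\widetilde{E_i})=3^{3-2}=3$ by Cayley's formula. Multiplying the $n+1$ factors yields
$$\kappa(Q_{4n}^\#)=\kappa(\mathbb{Z}_{2n}^\#)\cdot 3^{\,n},$$
which is the desired identity.

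I expect the only genuinely delicate point to be the adjacency analysis of the first paragraph: one must be sure that deleting $x^n$ completely isolates the $n$ edges, equivalently that each $x^iy$ has no neighbour besides $x^n$ and its inverse. This hinges entirely on the structural fact that every element outside $\langle x\rangle$ has order $4$, and it is also what forces the $n$ triangles to meet only in the single vertex $x^n$. Once this is secured, the rest is a direct bookkeeping of the factors supplied by Theorem~\ref{th-222}.
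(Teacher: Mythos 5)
Your proof is correct and takes essentially the same route as the paper: both single out the involution $x^n$ as the cut vertex and apply Theorem~\ref{th-222} to the decomposition of $\mathcal{P}(Q_{4n}^\#)$ into $\mathcal{P}(\mathbb{Z}_{2n}^\#)$ together with $n$ triangles sharing only $x^n$. Your first paragraph merely makes explicit the adjacency facts (every element outside $\langle x\rangle$ has order $4$ and squares to $x^n$) that the paper states in the discussion immediately preceding the theorem.
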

\begin{proof} Let
$\Gamma=\mathcal{P}(Q_{4n}^\#)$. As we mentioned already, $\Gamma$
is a connected graph and contains $x^n$ as a cut vertex. Now, we
consider the following set of vertices. Let
$$\begin{array}{l} S_0=\{x, x^2, \ldots, x^{2n-1}\},\\[0.2cm]
S_i=\{yx^{i-1}, yx^{n+i-1}, x^n\}, \ \  i=1, 2,  \ldots, n.
\end{array} $$
Then
$$\Gamma=\Gamma[S_0]+\Gamma[S_1]+\cdots+\Gamma[S_n],$$
in which $\Gamma[S_i]\cap\Gamma[S_j]=(\{x^n\}, \emptyset)$,
$0\leqslant i<j\leqslant n$. Notice that, we have
$\Gamma[S_0]=\mathcal{P}(\mathbb{Z}_{2n}^\#)$. Finally, from
Theorem \ref{th-222}, it follows that
$$\begin{array}{lll}\kappa(\Gamma)=\prod\limits_{i=0}^{n}\kappa(\Gamma[S_i])& = &
\kappa(\Gamma[S_0])\prod\limits_{i=1}^{n}\kappa(\Gamma[S_i])
\\[0.5cm] &=&\kappa(\mathbb{Z}_{2n}^\#)\prod\limits_{i=1}^{n}\kappa(K_3)=\kappa(\mathbb{Z}_{2n}^\#)\cdot
3^n,\\ \end{array}$$ as claimed.
\end{proof}

\vspace{2cm} \setlength{\unitlength}{3mm}
\begin{picture}(0,0)(-1,5)
\linethickness{0.7pt} %
\put(15.04,10){\circle*{0.3}}%
\put(18,10){\circle*{0.3}}%
\put(13.8,7.5){\circle*{0.3}}%
\put(19.2,7.5){\circle*{0.3}}%
\put(16.5,5.5){\circle*{0.3}}%
\put(16,10){$\ldots$}%
\put(13.8,7.5){\line(1,2){1.2}}%
\put(13.8,7.5){\line(4,-3){2.7}}%
\put(18,10){\line(1,-2){1.2}}%
\put(16.5,5.5){\line(4,3){2.7}}%
\put(16.5,5.5){\line(1,3){1.5}}%
\put(16.5,5.5){\line(-1,3){1.5}}%
\put(13.8,7.5){\line(1,0){5.3}}%
\put(13.8,7.5){\line(5,3){4.3}}%
\put(19.2,7.5){\line(-5,3){4.2}}%
\put(16.5,5.5){\line(-1,-5){0.65}}%
\put(16.5,5.5){\line(1,-5){0.65}}%
\put(17.2,2.1){\circle*{0.3}}%
\put(15.8,2.1){\circle*{0.3}}%
\put(15.8,2.1){\line(1,0){1.5}}%
\put(17.6,2.1){$\ldots$}%
\put(14.3,2.1){$\ldots$}%
\put(16.5,5.5){\line(-4,-3){4.5}}%
\put(16.5,5.5){\line(-4,-5){2.7}}%
\put(12,2.1){\circle*{0.3}}%
\put(13.8,2.1){\circle*{0.3}}%
\put(12,2.1){\line(1,0){1.8}}%
\put(16.5,5.5){\line(4,-3){4.5}}%
\put(16.5,5.5){\line(4,-5){2.7}}%
\put(21,2.1){\circle*{0.3}}%
\put(19.2,2.1){\circle*{0.3}}%
\put(19.2,2.1){\line(1,0){1.8}}%
\put(13.7,10.5){\small $x^{n-2}$}%
\put(18,10.5){\small $x^{n+2}$}%
\put(11.5,7.3){\small $x^{n-1}$}%
\put(19.8,7.3){\small $x^{n+1}$}%
\put(14.7,5.2){\small $x^n$}%
\put(11.3,1.2){\tiny $y$}%
\put(12.8,1.2){\tiny $yx^n$}%
\put(14.5,1.2){\tiny $yx^i$}%
\put(16,1.2){\tiny $yx^{n+i}$}%
\put(18.4,1.2){\tiny $yx^{n-1}$}%
\put(20.9,1.2){\tiny $yx^{2n-1}$}%
\put(10.5,-1){{\small {\bf Fig. 4.} The graph
$\mathcal{P}(Q_{4n}^\#)$.}}
\end{picture}\\[2cm]

As an immediate consequence of Theorem \ref{result-333}, Lemma
\ref{complete-CSS} and Cayley's Theorem, we have the following
corollary.
\begin{corollary}\label{corollary-1}
If $n$ is a power of $2$, then the tree-number of the power graph
$\mathcal{P}(Q_{4n}^\#)$ is given by the formula
$$\kappa(Q_{4n}^\#)=3^n\cdot (2n-1)^{2n-3}.$$
\end{corollary}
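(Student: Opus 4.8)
The plan is to obtain this as a short deduction from Theorem \ref{result-333} together with Lemma \ref{complete-CSS} and Cayley's formula. Theorem \ref{result-333} already supplies the identity $\kappa(Q_{4n}^\#)=3^n\cdot\kappa(\mathbb{Z}_{2n}^\#)$ for every natural number $n$, so the only work remaining is to evaluate $\kappa(\mathbb{Z}_{2n}^\#)$ under the extra hypothesis that $n$ is a power of $2$.

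First I would observe that if $n$ is a power of $2$, then $2n$ is again a power of $2$, hence a prime power. By Lemma \ref{complete-CSS}, the power graph $\mathcal{P}(\mathbb{Z}_{2n})$ of a cyclic group of prime-power order is complete, that is, $\mathcal{P}(\mathbb{Z}_{2n})=K_{2n}$. Deleting the identity vertex $1$ then yields $\mathcal{P}(\mathbb{Z}_{2n}^\#)=K_{2n-1}$, the complete graph on $2n-1$ vertices.

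Applying Cayley's formula $\kappa(K_m)=m^{m-2}$ with $m=2n-1$ gives $\kappa(\mathbb{Z}_{2n}^\#)=(2n-1)^{2n-3}$. Substituting this into the identity of Theorem \ref{result-333} yields $\kappa(Q_{4n}^\#)=3^n\cdot(2n-1)^{2n-3}$, as required.

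There is no genuine obstacle in this argument; the entire content lies in recognizing that the hypothesis that $n$ is a power of $2$ forces $2n$ to be a prime power, which is precisely the condition under which Lemma \ref{complete-CSS} makes $\mathcal{P}(\mathbb{Z}_{2n})$ complete. Everything else is a direct substitution into results already established, so the corollary follows immediately.
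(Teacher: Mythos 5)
Your proposal is correct and is exactly the paper's intended derivation: the paper states this corollary as an immediate consequence of Theorem \ref{result-333}, Lemma \ref{complete-CSS} and Cayley's formula, which is precisely the chain you wrote out (with the key observation that $n$ a power of $2$ makes $2n$ a prime power, so $\mathcal{P}(\mathbb{Z}_{2n}^\#)=K_{2n-1}$).
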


Now, we return to the power graph $\mathcal{P}(Q_{4n})$.
\begin{theorem}\label{result-222}
If $n$ is a power of $2$, then the tree-number of the power graph
$\mathcal{P}(Q_{4n})$ is given by the formula
$$\kappa(Q_{4n})=2^{5n-1}\cdot n^{2n-2}.$$
\end{theorem}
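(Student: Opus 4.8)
The plan is to identify $\mathcal{P}(Q_{4n})$ as an explicit join and then read off its Laplacian spectrum. The key structural observation is that, when $n$ is a power of $2$, the graph $\mathcal{P}(Q_{4n})$ has \emph{two} universal vertices. Indeed, the identity $1$ is adjacent to every vertex by definition, while the unique involution $x^n$ is adjacent to every other vertex because every nontrivial cyclic subgroup of $Q_{4n}$ contains $x^n$ (as was already noted before Theorem \ref{result-333}). Deleting $\{1,x^n\}$ and using the description of $\mathcal{P}(Q_{4n}^\#)$ recorded above, the remaining $4n-2$ vertices split into the $2n-2$ powers $x^i$ ($i\neq 0,n$), which induce a complete graph $K_{2n-2}$ since $2n$ is a power of $2$ and hence $\mathcal{P}(\mathbb{Z}_{2n})$ is complete by Lemma \ref{complete-CSS}, together with the $2n$ elements $x^iy$ outside $\langle x\rangle$, which split into $n$ pairs $\{x^iy,\,x^{n+i}y\}$ each inducing a $K_2$ and joined to nothing outside $\{1,x^n\}$. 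Therefore
\[
\mathcal{P}(Q_{4n}) = K_2 \vee \big(K_{2n-2} \oplus n K_2\big), \qquad K_2 = \{1, x^n\}.
\]

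Next I would compute the Laplacian eigenvalues of this join. The summand $K_{2n-2}\oplus nK_2$ on $4n-2$ vertices has Laplacian spectrum $0^{(n+1)},\,2^{(n)},\,(2n-2)^{(2n-3)}$ (the complete graph $K_{2n-2}$ contributes $0$ and $(2n-2)^{(2n-3)}$, and each of the $n$ copies of $K_2$ contributes $0$ and $2$), while $K_2$ has spectrum $0,2$. Applying the standard description of the Laplacian spectrum of a join $G_1\vee G_2$ --- namely $0$, $|G_1|+|G_2|$, the nonzero-indexed eigenvalues of $G_1$ each shifted by $|G_2|$, and the nonzero-indexed eigenvalues of $G_2$ each shifted by $|G_1|$ --- with $|G_1|=2$ and $|G_2|=4n-2$, one finds that $\mathcal{P}(Q_{4n})$ has Laplacian spectrum
\[
0, \quad (4n)^{(2)}, \quad 2^{(n)}, \quad 4^{(n)}, \quad (2n)^{(2n-3)}.
\]
By the Matrix-Tree Theorem (equivalently, Temperley's Theorem \ref{th-1}), $\kappa(Q_{4n})$ is $1/(4n)$ times the product of the nonzero eigenvalues, so
\[
\kappa(Q_{4n}) = \frac{1}{4n}\,(4n)^2 \cdot 2^n \cdot 4^n \cdot (2n)^{2n-3} = 2^{5n-1}\, n^{2n-2},
\]
after collecting powers of $2$ and of $n$; this is the asserted formula.

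If one prefers to stay strictly within the paper's toolkit and avoid quoting the join-spectrum formula, the same conclusion follows by forming the matrix $\mathbf{J}+\mathbf{Q}$ for the graph $K_2\vee(K_{2n-2}\oplus nK_2)$ and evaluating $\det(\mathbf{J}+\mathbf{Q})/(4n)^2$ directly via Theorem \ref{th-1}, exactly in the block-reduction style used in the proof of Theorem \ref{result2}: the two universal rows and columns together with the block-diagonal structure coming from the disjoint union make the determinant collapse to a small product. I expect the main (though routine) obstacle to be precisely this spectral, or equivalently determinant, bookkeeping --- keeping the multiplicities straight and separating the ``extra'' zero eigenvalues of the disconnected summand $K_{2n-2}\oplus nK_2$ from the genuine kernel of the full Laplacian. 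As a consistency check, running the identical argument on $\mathcal{P}(Q_{4n}^\#)=K_1\vee(K_{2n-2}\oplus nK_2)$ (one universal vertex instead of two) yields the spectrum $0,\,(4n-1),\,1^{(n)},\,3^{(n)},\,(2n-1)^{(2n-3)}$ and hence reproduces $\kappa(Q_{4n}^\#)=3^n(2n-1)^{2n-3}$, in agreement with Corollary \ref{corollary-1}, which gives me confidence that the spectrum displayed above is correct.
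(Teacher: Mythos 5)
Your proposal is correct, and it takes a genuinely different route from the paper's. The paper works directly on the matrix $\mathbf{J}+\mathbf{Q}$: it orders the vertices so that the two universal vertices $x^n$ and $1$ come last, expands the determinant along those two rows to extract the factor $(4n)^2$, and then carries out a fairly long sequence of explicit row and column operations until the remaining $(4n-2)\times(4n-2)$ determinant collapses to $(4n)(2n)^{2n-3}\cdot 2^{3n}$, after which Theorem \ref{th-1} finishes the job. You instead make the structure explicit first: the identification $\mathcal{P}(Q_{4n})=K_2\vee\bigl(K_{2n-2}\oplus nK_2\bigr)$ is valid, since every outside element $x^iy$ has order $4$ with $\langle x^iy\rangle=\{1,\,x^iy,\,x^n,\,x^{n+i}y\}$, so the $2n$ outside elements pair into $n$ copies of $K_2$ adjacent to nothing beyond $\{1,x^n\}$, while $\langle x\rangle$ gives the $K_{2n}$ part by Lemma \ref{complete-CSS}. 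Your join-spectrum bookkeeping is right (the spectrum $0,\,(4n)^{(2)},\,2^{(n)},\,4^{(n)},\,(2n)^{(2n-3)}$ has the correct total multiplicity $4n$), the arithmetic yields $2^{5n-1}n^{2n-2}$, and the cross-check against Corollary \ref{corollary-1} is a genuine verification since the same decomposition with $K_1$ in place of $K_2$ reproduces $3^n(2n-1)^{2n-3}$. What your route buys: it is shorter, conceptually cleaner, and treats $\kappa(Q_{4n})$ and $\kappa(Q_{4n}^\#)$ as two instances of one calculation, whereas the paper proves them by unrelated methods (cut-vertex factorization for one, determinant manipulation for the other). What it costs: the Laplacian spectrum of a join, and the eigenvalue form of the Matrix-Tree theorem ($\kappa=\frac{1}{N}\prod_{i\geqslant 2}\lambda_i$), are external ingredients the paper never states; both are standard, and the eigenvalue form is equivalent to Theorem \ref{th-1} because $\mathbf{J}$ and $\mathbf{Q}$ commute, but a fully self-contained write-up would either prove the join formula or fall back on your second suggestion of running the determinant reduction on the join decomposition directly.
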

\begin{proof} We follow an
algebraic approach, again. In the sequel, for the sake of
convenience we will consider the following labeling of the
vertices of $\mathcal{P}(Q_{4n})$:
$$\begin{array}{lll} V & = & \{x, x^2, \ldots, x^{n-1}, x^{n+1}, \ldots,
x^{2n-1}, y, yx^n, yx, yx^{n+1},\\[0.3cm] & & \ \ \  \ldots,  yx^i, yx^{n+i},
\ldots, yx^{n-1}, yx^{2n-1}, x^n, 1\}.\end{array}$$

Now, the Laplacian matrix $\mathbf{Q}$ is given by:
$$\mathbf{Q}=\left[ \begin{array}{cccc|cc|c|cc|c|c}
\sigma & -1 & \cdots & -1& . & . & \cdots & . & . & -1  & -1\\[0.1cm]
-1 & \sigma & \cdots & -1& . & .&   \cdots & .& . & -1 & -1\\[0.1cm]
\vdots & \vdots & \ddots & -1& \vdots & \vdots  & \cdots & \vdots &  \vdots & \vdots & \vdots \\[0.1cm]
-1 & -1 & \cdots& \sigma & .& .&   \cdots & .& .& -1 &-1\\[0.1cm]
\hline
. & . & \cdots & .& 3& -1&  \cdots & .& .& -1 & -1\\[0.1cm]
. & . & \cdots & .& -1& 3& \cdots & .& .& -1 & -1 \\[0.1cm]
\hline
\vdots & \vdots & \cdots& \vdots& \vdots& \vdots& \ddots& \vdots& \vdots& \vdots  & \vdots \\[0.1cm]
\hline
. & . & \cdots & .& .& .&  \cdots & 3& -1& -1 & -1\\[0.1cm]
. & . & \cdots & .& .& .&  \cdots & -1& 3& -1 & -1 \\[0.1cm]
\hline
-1 & -1 & \cdots & -1& -1& -1&   \cdots& -1 & -1& \tau & -1\\[0.1cm]
\hline
-1 & -1 & \cdots & -1& -1& -1&  \cdots& -1  & -1& -1& \tau \\[0.1cm]
\end{array}\right ],$$
where $\sigma=2n-1$ and $\tau=4n-1$, and hence we obtain
$$\mathbf{J}+\mathbf{Q}=\left[ \begin{array}{cccc|cc|c|cc|c|c}
2n & . & \cdots & .& 1 & 1 &  \cdots & 1 & 1 & .& .\\[0.1cm]
. & 2n & \cdots & . & 1 & 1&   \cdots & 1& 1 & . & .\\[0.1cm]
\vdots & \vdots & \ddots & .& \vdots & \vdots & \cdots & \vdots &  \vdots & \vdots & \vdots\\[0.1cm]
. & . & \cdots& 2n & 1& 1&  \cdots & 1& 1& . & .\\[0.1cm]
\hline
1 & 1 & \cdots & 1& 4& .&  \cdots & 1& 1& . & .\\[0.1cm]
1 & 1 & \cdots & 1& .& 4&  \cdots & 1& 1& . & .\\[0.1cm]
\hline
\vdots & \vdots & \cdots& \vdots& \vdots& \vdots&  \ddots& \vdots& \vdots& \vdots  & \vdots\\[0.1cm]
\hline
1 & 1 & \cdots & 1& 1& 1&  1& 4& .& . & .\\[0.1cm]
1 & 1 & \cdots & 1& 1& 1& 1& .& 4& .& . \\[0.1cm]
\hline
. & . & \cdots & .& .& .&  \cdots & .& .& 4n & .\\[0.1cm]
\hline
. & . & \cdots & .& .& .&  \cdots & .& .&. & 4n \\[0.1cm]
\end{array}\right ].$$

To compute $\det(\mathbf{J}+\mathbf{Q})$, expand
$\det(\mathbf{J}+\mathbf{Q})$ with respect to its two last rows:
\begin{equation}\label{e1} \det(\mathbf{J}+\mathbf{Q})=(4n)^2\cdot \det
\left[\begin{array}{cccc|cc|c|cc}
2n & . & \cdots & .& 1 & 1 &   \cdots & 1 & 1 \\[0.1cm]
. & 2n & \cdots & . & 1 & 1& \cdots & 1& 1 \\[0.1cm]
\vdots & \vdots & \ddots & .& \vdots & \vdots &  \cdots & \vdots &  \vdots  \\[0.1cm]
. & . & \cdots& 2n & 1& 1&  \cdots & 1& 1\\[0.1cm]
\hline
1 & 1 & \cdots & 1& 4& .&  \cdots & 1& 1\\[0.1cm]
1 & 1 & \cdots & 1& .& 4&  \cdots & 1& 1 \\[0.1cm]
\hline
\vdots & \vdots & \cdots& \vdots& \vdots& \vdots&  \ddots& \vdots& \vdots \\[0.1cm]
\hline
1 & 1 & \cdots & 1& 1& 1& .& 4& . \\[0.1cm]
1 & 1 & \cdots & 1& 1& 1&  . & .& 4\\[0.1cm]
\end{array}\right].\end{equation}

Now, we apply the following row and column operations in the new
determinant, which is denoted by $D$:

We subtract row 1 from row $i$, $i=2, 3, \ldots, 2n-2$, and
subsequently we subtract column 1 from column $j$, $j=2, 3,
\ldots, 2n-2$. It is not too difficult to see that, step by step,
the rows and columns are ``emptied" until finally the determinant
$$D=\det \left[\begin{array}{cccc|cc|c|cc}
2n & -2n & \cdots & -2n& 1 & 1 &   \cdots & 1 & 1 \\[0.1cm]
-2n & 4n & \cdots & 2n & . & .&  \cdots & .& . \\[0.1cm]
\vdots & \vdots & \ddots & \vdots & \vdots & \vdots  & \cdots & \vdots &  \vdots  \\[0.1cm]
-2n & 2n & \cdots& 4n & .& .&   \cdots & .& .\\[0.1cm]
\hline
1 & . & \cdots & . & 4& .&   \cdots & 1& 1\\[0.1cm]
1 & . & \cdots & . & .& 4&  \cdots & 1& 1 \\[0.1cm]
\hline
\vdots & \vdots & \cdots& \vdots& \vdots& \vdots&  \ddots& \vdots& \vdots \\[0.1cm]
\hline
1 & . & \cdots & .& 1& 1&   \cdots & 4& . \\[0.1cm]
1 & . & \cdots & .& 1& 1&   \cdots & .& 4\\[0.1cm]
\end{array}\right],$$
is obtained. Again  we subtract row 1 from row $i$, $i=2n-1, 2n,
\ldots, 4n-2$, and we obtain
$$D=\det \left[\begin{array}{cccc|cc|c|cc}
2n & -2n & \cdots & -2n& 1 & 1  & \cdots & 1 & 1 \\[0.1cm]
-2n & 4n & \cdots & 2n & . & . & \cdots & .& . \\[0.1cm]
\vdots & \vdots & \ddots & \vdots & \vdots & \vdots  & \cdots & \vdots &  \vdots  \\[0.1cm]
-2n & 2n & \cdots& 4n & .& .& \cdots & .& .\\[0.1cm]
\hline
1-2n & 2n & \cdots & 2n & 3& -1& \cdots & .& .\\[0.1cm]
1-2n & 2n & \cdots & 2n & -1& 3&   \cdots & .& . \\[0.1cm]
\hline
\vdots & \vdots & \cdots& \vdots& \vdots& \vdots&  \ddots& \vdots& \vdots \\[0.1cm]
\hline
1-2n & 2n & \cdots & 2n& .& . & .& 3& -1 \\[0.1cm]
1-2n & 2n & \cdots & 2n& .& .& .& -1& 3\\[0.1cm]
\end{array}\right].$$

In the following, $R_i$ and $C_j$ respectively designate the row
$i$ and the column $j$ of a matrix. Applying
$-\frac{1}{2}(R_{2n-1}+R_{2n}+\cdots+R_{4n-2})\longrightarrow
R_1$, to $D$,  we conclude that
$$
D=\det \left[\begin{array}{cccc|cc|c|cc}
\alpha & \beta & \cdots & \beta & . & . &   \cdots & . & . \\[0.1cm]
-2n & 4n & \cdots & 2n & . & .&   \cdots & .& . \\[0.1cm]
\vdots & \vdots & \ddots & \vdots& \vdots & \vdots &  \cdots & \vdots &  \vdots  \\[0.1cm]
-2n & 2n & \cdots& 4n & .& .&  \cdots & .& .\\[0.1cm]
\hline
1-2n & 2n & \cdots & 2n & 3& -1&   \cdots & .& .\\[0.1cm]
1-2n & 2n & \cdots & 2n & -1& 3&   \cdots & .& . \\[0.1cm]
\hline
\vdots & \vdots & \cdots& \vdots& \vdots& \vdots& \ddots& \vdots& \vdots \\[0.1cm]
\hline
1-2n & 2n & \cdots & 2n& .& .&  .& 3& -1 \\[0.1cm]
1-2n & 2n & \cdots & 2n& .& .&  .& -1& 3\\[0.1cm]
\end{array}\right],
$$
where $\alpha=2n^2+n$ and $\beta=-2n-2n^2$. Finally, by easy
computations, we can argue as follows:
$$
\begin{array}{lll}
D&=&\det \left[\begin{array}{cccc}
2n^2+n & -2n-2n^2 & \cdots & -2n-2n^2  \\[0.1cm]
-2n & 4n & \cdots & 2n \\[0.1cm]
\vdots & \vdots & \ddots & \vdots \\[0.1cm]
-2n & 2n & \cdots& 4n \\[0.1cm]
\end{array}\right]\cdot 2^{3n}\\[1.5cm]
&& \ \mbox{ ($C_1\longrightarrow C_j$, $j=2, 3, \ldots, 2n-2.$)
}\\
\end{array}$$
$$
\begin{array}{lll}
&=&\det \left[\begin{array}{cccc}
2n^2+n & -n & \cdots & -n  \\[0.1cm]
-2n & 2n & \cdots & . \\[0.1cm]
\vdots & \vdots & \ddots & \vdots \\[0.1cm]
-2n & . & \cdots& 2n \\[0.1cm]
\end{array}\right]\cdot 2^{3n}\\[1.5cm]
&&\mbox{($\frac{1}{2}(R_2+R_3+\cdots+R_{2n-2})\longrightarrow
R_1$)
}\\[0.5cm]
&=&\det \left[\begin{array}{cccc}
4n & . & \cdots & .  \\[0.1cm]
-2n & 2n & \cdots & . \\[0.1cm]
\vdots & \vdots & \ddots & \vdots \\[0.1cm]
-2n & . & \cdots& 2n \\[0.1cm]
\end{array}\right]\cdot 2^{3n}\\[1.5cm]
& = & (4n)(2n)^{2n-3}\cdot 2^{3n}=2^{5n-1}\cdot n^{2n-2}.\\
\end{array}$$
If this is substituted in Eq. (\ref{e1}), then we obtain
$$ \det(\mathbf{J}+\mathbf{Q})=(4n)^2\cdot 2^{5n-1}\cdot n^{2n-2}.$$
Now, from Theorem \ref{th-1}, it follows that
$$\kappa(Q_{4n})=\frac{\det(\mathbf{J}+\mathbf{Q})}{(4n)^2}= 2^{5n-1}\cdot n^{2n-2},$$
and the proof is complete. \end{proof}
\section{Some Applications}
\subsection{Groups With Small Tree-number} We begin with recalling
a definition in Graph Theory. A {\em star} is a tree consisting of
one vertex adjacent to all the others.
\begin{proposition}\label{prop-1} Let $G$ be a finite group. Then the following
statements are equivalent:
\begin{itemize}
\item[{\rm (a)}] $G$ is an elementary
abelian $2$-group.
\item[{\rm (b)}] the power graph $\mathcal{P}(G)$ is a star graph.
\item[{\rm (c)}] $\kappa(G)=1$.
\end{itemize}
\end{proposition}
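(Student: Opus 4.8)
The plan is to establish the cycle of implications $(a)\Rightarrow(b)\Rightarrow(c)\Rightarrow(a)$, since each of these forward arrows is more transparent than the corresponding reverse one.

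For $(a)\Rightarrow(b)$ I would argue directly from the group structure. In an elementary abelian $2$-group every non-identity element $x$ satisfies $x^2=1$, so $\langle x\rangle=\{1,x\}$ has order $2$. Given two \emph{distinct} non-identity elements $x,y$, the subgroups $\langle x\rangle$ and $\langle y\rangle$ are distinct of order $2$, so neither contains the other and hence $x\not\sim y$. Since $1$ is adjacent to every vertex, $\mathcal{P}(G)$ is the identity joined to the $|G|-1$ mutually non-adjacent remaining vertices, i.e. a star. The step $(b)\Rightarrow(c)$ is then immediate: a star is a tree, and a tree on $n$ vertices is its own unique spanning tree, so $\kappa(G)=1$.

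The real content, and the step I expect to be the main obstacle, is $(c)\Rightarrow(a)$: converting the purely numerical condition $\kappa(G)=1$ into the algebraic conclusion that $G$ has exponent $2$. The key observation is that an element of order $\geqslant 3$ forces a small clique. Suppose $G$ contains an element $x$ of order at least $3$. Then $x\neq x^{-1}$, both are non-identity, and since $\langle x\rangle=\langle x^{-1}\rangle$ we have $x\sim x^{-1}$; together with $1\sim x$ and $1\sim x^{-1}$, the set $\{1,x,x^{-1}\}$ is a clique of size $3$ in $\mathcal{P}(G)$. Hence $\omega(\mathcal{P}(G))\geqslant 3$, and Theorem \ref{th-5} gives $\kappa(G)\geqslant \omega(\mathcal{P}(G))^{\,\omega(\mathcal{P}(G))-2}\geqslant 3^{1}>1$, contradicting $\kappa(G)=1$. (Alternatively, this triangle is a cycle in $\mathcal{P}(G)$, so applying the Deletion--Contraction Theorem \ref{th-2} to one of its three non-cut edges already yields $\kappa(G)\geqslant 2$.)

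It follows that every non-identity element of $G$ has order $2$. A finite group of exponent $2$ is automatically abelian --- from $x^2=y^2=(xy)^2=1$ one gets $xyxy=1$, whence $yx=xy$ --- and an abelian group all of whose elements have order dividing $2$ is exactly an elementary abelian $2$-group, giving $(a)$ and closing the cycle. The only point requiring care is the standard fact that, for a connected graph, $\kappa=1$ is equivalent to being a tree; but the argument above sidesteps it, since the clique bound of Theorem \ref{th-5} rules out $\kappa(G)=1$ the moment an element of order $\geqslant 3$ appears.
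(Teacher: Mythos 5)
Your proposal is correct, and its implications (a)$\Rightarrow$(b) and (b)$\Rightarrow$(c) coincide with the paper's argument. The only divergence is in (c)$\Rightarrow$(a). The paper deduces from $\kappa(G)=1$ that $\mathcal{P}(G)$ is a tree (implicitly using the standard fact that a connected graph with exactly one spanning tree is acyclic), and then observes that an element $x$ of order at least $3$ yields the cycle $1\sim x\sim x^2\sim 1$, contradicting acyclicity. You rest on the same combinatorial observation --- an element of order $\geqslant 3$ creates a triangle through the identity --- but close the contradiction numerically: the clique $\{1,x,x^{-1}\}$ gives $\omega(\mathcal{P}(G))\geqslant 3$, so Theorem \ref{th-5} yields $\kappa(G)\geqslant 3^{3-2}=3>1$ (or, via your alternative, deletion--contraction on a non-cut edge of the triangle gives $\kappa(G)\geqslant 2$). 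This buys a small gain in self-containedness, since you never need the ``$\kappa=1$ iff tree'' equivalence and only cite results stated in the paper, at the cost of invoking the comparatively heavy Theorem \ref{th-5} for an elementary fact; you also spell out the step (exponent $2$ implies abelian) that the paper leaves tacit. In substance, though, the two proofs are parallel, and both are sound.
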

\begin{proof} We may assume that $|G|\geqslant 3$, since otherwise all
the statements are trivially true.

${\rm (a)} \Rightarrow {\rm (b)}$ Suppose that $G$ is an
elementary abelian $2$-group.  If $x$ and $y$ are two distinct
non-trivial elements, then $x\notin \langle y\rangle$ and
$y\notin \langle x\rangle$, and so by the definition $x\nsim y$.
This shows that the set of involutions of $G$ is an independent
set in $\mathcal{P}(G)$. In other words, the power graph
$\mathcal{P}(G)$ is a star graph.

${\rm (b)} \Rightarrow {\rm (c)}$  It is a straightforward
verification.

${\rm (c)} \Rightarrow {\rm (a)}$ If $\kappa(G)=1$, then certainly
$\mathcal{P}(G)$ is a tree. We observe that if there is an
element $x$ of order $o(x)\geqslant 3$, then  $1\sim x\sim
x^2\sim 1$ is a cycle in $\mathcal{P}(G)$ and this contradicts
the fact that $\mathcal{P}(G)$ is a tree. Thus each element of
$G\setminus \{1\} $ has order 2, and hence $G$ is an elementary
abelian $2$-group, as required. \end{proof}

\begin{theorem}\label{th-small} Let $G$ be a nontrivial finite group. Then  $\kappa(G)<5^3$ if and only if
one of the following occurs:
\begin{itemize}
\item[{\rm (a)}] $\omega(G)=\{1, 2\}$ and $G$ is an elementary
abelian $2$-group.
\item[{\rm (b)}] $\omega(G)=\{1, 3\}$  and $G\cong \mathbb{Z}_3$ or $\mathbb{Z}_3\times
\mathbb{Z}_3$.

\item[{\rm (c)}] $\omega(G)=\{1, 2, 3\}$ and $G\cong \mathbb{S}_3\cong \mathbb{Z}_3\rtimes \mathbb{Z}_2$,
$(\mathbb{Z}_3\times \mathbb{Z}_3)\rtimes \mathbb{Z}_2$, or
$\mathbb{A}_4\cong(\mathbb{Z}_2\times \mathbb{Z}_2) \rtimes
\mathbb{Z}_3$

\item[{\rm (d)}] $\omega(G)=\{1, 2, 4\}$  and $G\cong \mathbb{Z}_4$  or
 $D_8$.
\end{itemize}
\end{theorem}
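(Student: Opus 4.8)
The plan is to determine $\pi(G)$ first, then the spectrum $\omega(G)$, and finally the isomorphism type. Since $\kappa(G)<5^{3}=5^{5-2}$, I apply Corollary \ref{cor-semidir-1}: the least prime $p$ with $\kappa(G)<p^{p-2}$ is $3$ when $\kappa(G)<3$ (the prime $2$ never qualifies, as $\kappa(G)\geqslant1=2^{0}$) and $5$ otherwise, so $\pi(G)\subseteq\pi(2!)=\{2\}$ or $\pi(G)\subseteq\pi(4!)=\{2,3\}$; in all cases $\pi(G)\subseteq\{2,3\}$. To bound element orders, note that $\langle x\rangle$ is a cyclic subgroup, so the single–subgroup form of Theorem \ref{th-4} gives $\kappa(G)\geqslant\kappa(\mathbb{Z}_{o(x)})$; since $\kappa(\mathbb{Z}_{6})=540$, $\kappa(\mathbb{Z}_{8})=8^{6}$ and $\kappa(\mathbb{Z}_{9})=9^{7}$ all exceed $5^{3}$, no element has order $6,8$ or $9$, whence $\omega(G)\subseteq\{1,2,3,4\}$.

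The crucial reduction is that $3$ and $4$ cannot both lie in $\omega(G)$. Indeed, if $3\in\pi(G)$ then (the $3$-part having exponent $3$) the number $c_{3}$ of subgroups of order $3$ satisfies $c_{3}\equiv1\pmod3$ when a Sylow $3$-subgroup is $\mathbb{Z}_{3}$ and $c_{3}\geqslant4$ when it is $\mathbb{Z}_{3}\times\mathbb{Z}_{3}$, so $c_{3}\in\{1,4,7,\dots\}$. Suppose $x$ has order $4$. If $c_{3}\geqslant4$, Theorem \ref{th-5} applied to the $c_{3}$ triangles together with the clique $\langle x\rangle\cong K_{4}$ (all meeting only in $1$) gives $\kappa(G)\geqslant3^{4}\cdot4^{2}>5^{3}$. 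If $c_{3}=1$, the unique $P_{3}$ of order $3$ is normal with $[G:C_{G}(P_{3})]\leqslant2$; then either $x\in C_{G}(P_{3})$, yielding an element of order $12$, or $x^{2}\in C_{G}(P_{3})$, yielding an element of order $6$, so $\kappa(G)\geqslant\kappa(\mathbb{Z}_{6})=540$. Hence $G$ is either an EPO-group with $\omega(G)\subseteq\{1,2,3\}$, or a $2$-group of exponent $4$; these regimes will produce (a)--(c) and (d).

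For an EPO-group Corollary \ref{cor-epo} gives $\kappa(G)=\prod_{p\in\pi(G)}p^{(p-2)c_{p}}=3^{c_{3}}$, so $\kappa(G)<5^{3}$ is equivalent to $c_{3}\leqslant4$, i.e. $c_{3}\in\{0,1,4\}$ by the congruence above. The value $c_{3}=0$ forces $\pi(G)=\{2\}$, so $G$ is elementary abelian of exponent $2$ with $\kappa(G)=1$ by Proposition \ref{prop-1} (case (a)). For $c_{3}=1$ the unique $P_{3}=\mathbb{Z}_{3}$ is normal with $C_{G}(P_{3})=P_{3}$ (else an involution would centralise $P_{3}$ and create order $6$), so $[G:P_{3}]\leqslant2$, giving $\mathbb{Z}_{3}$ or $\mathbb{S}_{3}$. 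For $c_{3}=4$ the Sylow $3$-subgroup is $\mathbb{Z}_{3}$ with $n_{3}=4$, or $\mathbb{Z}_{3}\times\mathbb{Z}_{3}$ and normal: in the first subcase $C_{G}(P_{3})=P_{3}$ forces $|N_{G}(P_{3})|\in\{3,6\}$ and $|G|\in\{12,24\}$, and analysing the action $G\to\mathbb{S}_{4}$ on $\mathrm{Syl}_{3}(G)$ eliminates $24$ and leaves $\mathbb{A}_{4}$; in the second subcase the EPO hypothesis makes $V\rtimes Q$ (with $V=\mathbb{Z}_{3}\times\mathbb{Z}_{3}$) Frobenius, so its elementary abelian complement $Q$ is a Frobenius complement, forcing $Q=\mathbb{Z}_{2}$ acting by inversion and giving $(\mathbb{Z}_{3}\times\mathbb{Z}_{3})\rtimes\mathbb{Z}_{2}$. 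This yields (b) and (c).

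Finally, for a $2$-group $G$ of exponent $4$ I would exploit the rigid shape of $\mathcal{P}(G)$: every order-$4$ element is adjacent only to $1$, its square, and its inverse, so $1$ is a cut vertex and the components of $\mathcal{P}(G)-1$ are the isolated non-square involutions together with, for each involution $z$ that is a square, the $r_{z}$ inverse-pairs of order-$4$ elements squaring to $z$. Each such block, rejoined to $1$, is $r_{z}$ copies of $K_{4}$ glued along the edge $\{1,z\}$, and a Temperley computation in the spirit of Theorem \ref{result-222} gives its tree-number $8^{r_{z}}(r_{z}+1)$; by Theorem \ref{th-222}, $\kappa(G)=\prod_{z}8^{r_{z}}(r_{z}+1)$. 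Since every factor is $\geqslant16$ while $16^{2}>5^{3}$ and $8^{2}\cdot3=192>5^{3}$, the bound $\kappa(G)<5^{3}$ forces exactly one square-involution with $r_{z}=1$, i.e. $G$ has a unique cyclic subgroup of order $4$; a short normaliser argument then gives $|G|\leqslant8$ and $G\cong\mathbb{Z}_{4}$ or $D_{8}$ (case (d)), $Q_{8}$ being excluded as it has six elements of order $4$. The backward direction is immediate from the values $\kappa\in\{1,3,16,81\}$ found above. I expect the main obstacle to be the two finite-group classifications inside $c_{3}=4$---in particular ruling out an EPO-group of order $24$ and invoking the Frobenius-complement restriction to force $Q=\mathbb{Z}_{2}$---since these are the only steps using structure theory beyond the tree-number inequalities.
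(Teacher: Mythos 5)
Your proof is correct, and although it follows the paper's opening skeleton (tree-number lower bounds force $\pi(G)\subseteq\{2,3\}$, then elements of order $6$, $8$, $9$ are excluded so that $\omega(G)\subseteq\{1,2,3,4\}$, then a case split), it diverges from the paper's proof in two substantive ways. First, for the case $\omega(G)=\{1,2,3\}$ the paper simply invokes B.~H.~Neumann's classification \cite{Neumann} (such a group is Frobenius, either $\mathbb{Z}_3^t\rtimes\mathbb{Z}_2$ or $\mathbb{Z}_2^{2t}\rtimes\mathbb{Z}_3$) and then bounds $t$ by tree-number inequalities, whereas you note that any EPO-group here has $\kappa(G)=3^{c_3}$ by Corollary \ref{cor-epo}, reduce to $c_3\in\{0,1,4\}$ by the congruence on the number of order-$3$ subgroups, and recover the group structure by hand (centralizer of the normal $P_3$; the action on ${\rm Syl}_3(G)$ eliminating order $24$; the Frobenius-complement restriction forcing the complement to be $\mathbb{Z}_2$). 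This is self-contained where the paper outsources to a classification theorem, at the cost of the two steps you leave as sketches---both of which I checked are sound and routine to fill in. Second, and more interestingly, for $\omega(G)=\{1,2,4\}$ the paper only bounds the number of cyclic subgroups of order $4$ by two (via Theorem \ref{th-5}) and then asserts that $\mathbb{Z}_4$ and $D_8$ are the only candidates; as written this overlooks $\mathbb{Z}_4\times\mathbb{Z}_2$ and $D_8\times\mathbb{Z}_2$, which also have exactly two cyclic subgroups of order $4$ meeting in a subgroup of order $2$ and are excluded only because their tree-numbers equal $192>5^3$. Your exact evaluation $\kappa(G)=\prod_z 8^{r_z}(r_z+1)$, obtained from the cut-vertex decomposition (Theorem \ref{th-222}) and the rigid shape of $\mathcal{P}(G)$ for an exponent-$4$ $2$-group, closes exactly this gap: it forces a unique cyclic subgroup $\langle x\rangle$ of order $4$, after which $C_G(\langle x\rangle)=\langle x\rangle$ (an involution centralizing $x$ outside $\langle x\rangle$ would create a second cyclic subgroup of order $4$) and $G/\langle x\rangle\hookrightarrow{\rm Aut}(\langle x\rangle)$ give $|G|\leqslant 8$, hence $\mathbb{Z}_4$ or $D_8$. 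Your block formula also checks against Table 1: $\kappa(\mathbb{Z}_4\times\mathbb{Z}_2)=2^6\cdot 3=8^2\cdot 3$ and $\kappa(Q_8)=2^{11}=8^3\cdot 4$. So in this case your route is not merely valid but more rigorous than the printed proof; the trade-off overall is that the paper's argument is shorter, while yours replaces appeals to classification results by explicit tree-number computations.
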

\begin{proof} We need only prove the
necessity. Let $G$ be a finite group such that $\kappa(G)<5^3$.
Then, from Theorem \ref{th-4}, we have
$$125>\kappa(G)\geqslant \prod_{p\in \pi(G)}p^{p-2},$$
which implies that $G$ is a $\{2, 3\}$-group with spectrum
$$\omega(G)\subseteq \{2^\alpha\cdot 3^\beta \ | \ \alpha\geqslant
0, \beta\geqslant 0\}.$$ Let $G_p\in {\rm Syl}_p(G)$, where $p\in
\{2, 3\}$. If $\mu(G_2)\geqslant 8$ (resp. $\mu(G_3)\geqslant 9$),
then by Theorem \ref{th-4}, $\kappa(G)\geqslant 8^6>125$ (resp.
$\kappa(G)\geqslant 9^7>125$), which is again a contradiction.
Hence, we conclude that $\omega(G)\subseteq \{1, 2, 3, 4, 6,
12\}$. If $G$ contains an element of order 6, say $x$, then
$\Omega=\{1, x, x^2, x^4, x^5\}$ is a clique in $\mathcal{P}(G)$,
and by Theorem \ref{th-5}, it follows that $\kappa(G)\geqslant
5^3$, which is a contradiction. This forces $\omega(G)\subseteq
\{1, 2, 3, 4\}$. We now consider five cases separately.

{\em Case $1$.} $\omega(G)=\{1, 2\}$. In this case, $G$ is an
elementary abelian $2$-group and by Proposition \ref{prop-1},
$\kappa(G)=1$.

{\em Case $2$.} $\omega(G)=\{1, 3\}$. In this case, $G$ is an
elementary $3$-group of order $3^n$, and by Eq.
(\ref{equation-1}), we have $\kappa(G)=3^{\frac{3^n-1}{2}}$.
Consequently $3^{\frac{3^n-1}{2}}<125$, which forces $n\leqslant
2$.

\begin{itemize}
\item If $n=1$, then $G\cong \mathbb{Z}_3$ and $\kappa(G)=3$.

\item If $n=2$, then $G\cong \mathbb{Z}_3\times\mathbb{Z}_3$ and
$\kappa(G)=81$.
\end{itemize}

{\em Case $3$.} $\omega(G)=\{1, 2, 3\}$. In this case, using a
result of B. H. Neumann \cite{Neumann}, $G=K\rtimes C$ is a
Frobenius group with kernel $K$ and complement $C$, where either
$K\cong \mathbb{Z}_3^t$, $C\cong \mathbb{Z}_2$ or $K\cong
\mathbb{Z}_2^{2t}$, $C\cong \mathbb{Z}_3$.

{\em Subcase $3.1$.} $G=\mathbb{Z}_3^t\rtimes \mathbb{Z}_2$. By
Theorem \ref{th-4} and Eq. (\ref{equation-1}), we have
$$125>\kappa(G)\geqslant \kappa(\mathbb{Z}_3^t)=3^\frac{3^t-1}{2},$$
which implies that $t\leqslant 2$. Thus $G\cong
\mathbb{Z}_3\rtimes \mathbb{Z}_2\cong \mathbb{S}_3$ or
$G\cong(\mathbb{Z}_3\times \mathbb{Z}_3)\rtimes \mathbb{Z}_2$. In
the last case, we see that $\mathbb{Z}_3\times \mathbb{Z}_3$ is
the only Sylow $3$-subgroup of $G$, and thus $G\setminus \{1\}$
has 8 elements of order 3 and 9 element of order 2. Therefore, we
have
$$\mathcal{P}(\mathbb{Z}_3\rtimes \mathbb{Z}_2)=K_1\vee (3K_1\oplus K_2) \ \ \ \mbox{and} \ \ \
\mathcal{P}((\mathbb{Z}_3\times \mathbb{Z}_3)\rtimes
\mathbb{Z}_2)=K_1\vee (9K_1\oplus 4K_2),$$ and so
$\kappa(\mathbb{Z}_3\rtimes \mathbb{Z}_2)=3$ and
$\kappa((\mathbb{Z}_3\times \mathbb{Z}_3)\rtimes
\mathbb{Z}_2)=3^4$.

{\em Subcase $3.2$.} $G=\mathbb{Z}_2^{2t}\rtimes \mathbb{Z}_3$.
Let $x_1, x_2, \ldots, x_m$ be elements of $G$ of order $3$ such
that $\langle x_i\rangle\cap\langle x_j\rangle=\{1\}$. Then, by
Theorem \ref{th-4}, we deduce that
$$125>\kappa(G)\geqslant \prod_{i=1}^{m}\kappa(\langle x_i\rangle)=3^m,$$
which yields that $m\leqslant 4$. This means that, the group $G$
has at most 8 elements of order 3. The only group of this type is
$G=(\mathbb{Z}_2\times\mathbb{Z}_2)\rtimes \mathbb{Z}_3\cong
\mathbb{A}_4$. Clearly, $\mathcal{P}(\mathbb{A}_4)=K_1\vee
(3K_1\oplus 4K_2)$ and we have $\kappa(\mathbb{A}_4)=3^4$.

{\em Case $4$.} $\omega(G)=\{1, 2, 4\}$. In this case $G$ is a
$2$-group with exponent $4$. Suppose that $G$ contains a pair of
elements, say $x_1$ and $x_2$, of order 4 such that $\langle
x_1\rangle\cap\langle x_2\rangle=\{1\}$. Then it follows by
Theorem \ref{th-4} that $\kappa(G)\geqslant \kappa(\langle
x_1\rangle)\cdot \kappa(\langle x_2\rangle)=4^2\cdot 4^2=256$,
which is a contradiction. Now assume that $x_1, x_2, \ldots, x_m$
are elements of order $4$ such that $|\langle
x_i\rangle\cap\langle x_j\rangle|=2$. Then, $\Omega_i=\{1, x_i,
x_i^3\}$, $i=1, 2, \ldots, m-1$, and $\Omega_m=\langle
x_m\rangle$ are cliques in $\mathcal{P}(G)$ with $\Omega_i\cap
\Omega_j=\{1\}$, for each $1\leqslant i<j\leqslant m$, and by
Theorem \ref{th-5}, we deduce that
$$125>\kappa(G)\geqslant \kappa(\langle x_m\rangle)\cdot \prod_{i=1}^{m-1}|\Omega_i|^{|\Omega_i|-2}=4^2\cdot 3^{m-1},$$
which forces $m\leqslant 2$. This means that, the group $G$ has at
most 2 elements of order 4. The only groups $G$ with these
conditions are: $\mathbb{Z}_4$ or $D_8$, and we have
$\kappa(\mathbb{Z}_4)=\kappa(D_8)=2^4$.

{\em Case $5$.} $\omega(G)=\{1, 2, 3, 4\}$. Let $x$ and $y$ be
two elements of order $3$ and $4$, respectively. If there exists
another element $z$ of order 3 such that $\langle x\rangle\cap
\langle z\rangle=1$, then by Theorem \ref{th-4} we have
$$125>\kappa(G)\geqslant \kappa(\langle x\rangle)\cdot \kappa(\langle z\rangle)\cdot \kappa(\langle
y\rangle)=3\cdot 3\cdot 4^2=144,$$ which is a contradiction.
Therefore, the Sylow $3$-subgroup $G_3\in {\rm Syl}_3(G)$ is a
normal subgroup of $G$ of order 3. Moreover $\langle y \rangle$
acts fixed-point-freely by conjugation on $G_3$, and so
$G_3:\langle y\rangle$ is a Frobenius group. But then, we must
have $|\langle y\rangle|\big | |G_3|-1$, which is impossible.
Thus, in this case there does not exist a candidate for $G$. This
completes the proof. \end{proof}

The next result is a simple consequence of Theorem \ref{th-small}.
\begin{corollary}\label{coro-small} The following statements hold:
\begin{itemize}
\item[{\rm (a)}]
$h_{\mathcal{F}}(\mathbb{Z}_3)=h_{\mathcal{F}}(\mathbb{S}_3)=2$.

\item[{\rm (b)}] $h_{\mathcal{F}}(\mathbb{Z}_3\times \mathbb{Z}_3
)=h_{\mathcal{F}}((\mathbb{Z}_3\times \mathbb{Z}_3)\rtimes
\mathbb{Z}_2)=h_{\mathcal{F}}(\mathbb{A}_4)=3$.

\item[{\rm (c)}] $h_{\mathcal{F}}(\mathbb{Z}_4)=h_{\mathcal{F}}(D_8)=2$.
\end{itemize}
\end{corollary}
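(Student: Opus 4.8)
The plan is to read off each of the three claimed values of $h_{\mathcal{F}}$ directly from the classification furnished by Theorem \ref{th-small}, since that theorem lists, up to isomorphism, \emph{every} finite group whose power graph has fewer than $5^3$ spanning trees, together with the exact value of $\kappa$ in each case. The key observation is that $h_{\mathcal{F}}(G)$ counts the pairwise non-isomorphic finite groups $H$ with $\kappa(H)=\kappa(G)$, so for any group appearing in Theorem \ref{th-small} I need only group the listed groups by their $\kappa$-value and count the size of each fibre.

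First I would compile, from the proof of Theorem \ref{th-small} and the displayed computations therein, the table of tree-numbers: $\kappa(\mathbb{Z}_3)=\kappa(\mathbb{S}_3)=3$; $\kappa(\mathbb{Z}_3\times\mathbb{Z}_3)=\kappa((\mathbb{Z}_3\times\mathbb{Z}_3)\rtimes\mathbb{Z}_2)=\kappa(\mathbb{A}_4)=3^4=81$; and $\kappa(\mathbb{Z}_4)=\kappa(D_8)=2^4=16$. Because $3,\ 16,\ 81$ are all strictly less than $5^3=125$, Theorem \ref{th-small} guarantees that the groups it lists are the \emph{only} finite groups attaining these particular values. Consequently each count is complete: no group outside the classification can contribute.

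For part (a), the groups with $\kappa=3$ are exactly $\mathbb{Z}_3$ and $\mathbb{S}_3$, two non-isomorphic groups, so $h_{\mathcal{F}}(\mathbb{Z}_3)=h_{\mathcal{F}}(\mathbb{S}_3)=2$. For part (b), the groups with $\kappa=81$ are exactly $\mathbb{Z}_3\times\mathbb{Z}_3$, $(\mathbb{Z}_3\times\mathbb{Z}_3)\rtimes\mathbb{Z}_2$, and $\mathbb{A}_4$ (recalling Remark $3$, which already flags the coincidence $\kappa(\mathbb{A}_4)=\kappa(\mathbb{Z}_3\times\mathbb{Z}_3)=3^4$); these three are pairwise non-isomorphic, giving the common value $3$. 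For part (c), the groups with $\kappa=16$ are exactly $\mathbb{Z}_4$ and $D_8$, again two non-isomorphic groups, so the count is $2$.

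The only real subtlety — and hence the step I would state most carefully — is the logical direction: to conclude that a fibre has \emph{exactly} the listed size, I must invoke the fact that Theorem \ref{th-small} is an \emph{iff} classification, so that nothing has been omitted. The underlying arithmetic (distinctness of the three values $3$, $16$, $81$, and the non-isomorphism of the groups within each fibre, e.g. $\mathbb{Z}_3\not\cong\mathbb{S}_3$ and $\mathbb{Z}_4\not\cong D_8$) is immediate. I would therefore present the corollary as a one-line deduction per part, citing Theorem \ref{th-small} and its embedded $\kappa$-computations, and then simply tally the fibres.
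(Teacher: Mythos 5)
Your proposal is correct and is exactly the argument the paper intends: the paper states this corollary as a direct consequence of Theorem \ref{th-small} without further proof, and your reading off the fibres of $\kappa$ (values $3$, $2^4$, $3^4$, all below $5^3$) from that iff-classification, together with the tree-number computations embedded in its proof, is precisely that deduction. Your explicit emphasis on the ``only if'' direction of Theorem \ref{th-small} guaranteeing completeness of each fibre is the right point to make careful.
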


\subsection{A New Characterization of $\mathbb{A}_5$}
The following result is needed to prove Theorem \ref{th1-new}.
\begin{lm}\label{marty} {\rm ({\bf I. M. Isaacs})}
Let $G$ be a finite non-abelian simple group and let $p$ be a
prime dividing the order of $G$. Then $G$ has at least $p^2-1$
elements of order $p$, or equivalently, there is at least $p+1$
cyclic subgroups of order $p$ in $G$.
\end{lm}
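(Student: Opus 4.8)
The plan is to count the cyclic subgroups of order $p$ directly and to exploit a congruence modulo $p$ that holds in \emph{every} finite group, not just in simple ones. First I would let $c_p$ denote the number of subgroups of order $p$ in $G$ (this is exactly the quantity the statement calls the number of cyclic subgroups of order $p$). Two distinct subgroups of order $p$ intersect only in the identity, and each such subgroup contributes precisely $p-1$ elements of order $p$; hence the number of elements of order $p$ in $G$ equals $c_p(p-1)$. This immediately gives the claimed equivalence, since $c_p(p-1)\geqslant p^2-1=(p+1)(p-1)$ holds if and only if $c_p\geqslant p+1$. So it suffices to prove the single inequality $c_p\geqslant p+1$.

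The heart of the argument is the congruence $c_p\equiv 1\pmod p$, and this is where I expect the real content to lie. I would obtain it from Frobenius's theorem: since $p\mid |G|$, the number of solutions of $x^p=1$ in $G$ is divisible by $p$. These solutions are exactly the identity together with the elements of order $p$, so $1+c_p(p-1)\equiv 0\pmod p$. Reducing $c_p(p-1)\equiv -c_p\pmod p$, this becomes $1-c_p\equiv 0\pmod p$, that is, $c_p\equiv 1\pmod p$. Consequently $c_p$ is either equal to $1$ or at least $p+1$, and the whole problem is reduced to excluding the value $c_p=1$.

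Ruling out $c_p=1$ is then a short application of simplicity. If $G$ had a unique subgroup $Z$ of order $p$, then for each $g\in G$ the conjugate $gZg^{-1}$ is again a subgroup of order $p$, hence equals $Z$ by uniqueness; thus $Z\triangleleft G$. Because $G$ is non-abelian simple its order is not prime, so $Z$ is a proper nontrivial normal subgroup, contradicting the simplicity of $G$. Therefore $c_p\geqslant p+1$, which gives at least $(p+1)(p-1)=p^2-1$ elements of order $p$, as required. The only genuine obstacle is the mod-$p$ congruence $c_p\equiv 1\pmod p$; everything before and after it is bookkeeping together with one use of simplicity. If one preferred to avoid quoting Frobenius's theorem, the work would shift to establishing that congruence by hand (for instance, via the action of a Sylow $p$-subgroup on the set of order-$p$ subgroups), but invoking Frobenius keeps the proof cleanest.
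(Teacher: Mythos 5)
Your proof is correct, but it takes a genuinely different route from the paper's. The paper's argument (Isaacs') picks a subgroup $U$ of order $p$ central in a Sylow $p$-subgroup, sets $N=N_G(U)$, and uses simplicity twice: $U$ is not normal, so $N<G$, and ${\rm core}_G(N)=1$, so by the ``$n$ factorial theorem'' $|G|$ divides $n!$ where $n=|G:N|$; hence $p\mid n!$ forces $n\geqslant p$, while $p\nmid n$ (since $N$ contains a full Sylow $p$-subgroup) rules out $n=p$, giving at least $p+1$ conjugates of $U$. You instead derive the congruence $c_p\equiv 1\pmod p$ from Frobenius's theorem on solutions of $x^p=1$, and invoke simplicity only once, to exclude $c_p=1$ (a unique subgroup of order $p$ would be normal and, since $|G|$ is not prime, proper and nontrivial). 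Both arguments are complete; your counting steps (distinct order-$p$ subgroups meet trivially, each contributes $p-1$ elements of order $p$) are sound. What your approach buys: a more general dichotomy valid in every finite group with $p\mid|G|$ --- either a normal subgroup of order $p$ exists or there are at least $p+1$ of them --- and the $p+1$ subgroups need not lie in one conjugacy class. What the paper's approach buys: it avoids quoting Frobenius's theorem, a heavier classical input (though the exponent-$p$ case you need also follows from the McKay-style counting argument you allude to), using only Sylow theory and the elementary faithfulness of the coset action of a simple group; moreover it locates the $p+1$ subgroups explicitly as conjugates of a single subgroup central in a Sylow $p$-subgroup.
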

\begin{proof}
Let $U$ be a subgroup of order $p$ contained in the center of a
Sylow $p$-subgroup of $G$, and let $N = N_G(U)$. Since $G$ is
non-abelian simple, $U$ is not normal in $G$ and thus $N<G$. Write
$n=|G:N|$. By the elementary ``$n$ factorial theorem" we know
that $|G:{\rm core}_G(N)|$ divides $n!$, and since $G$ is simple
and ${\rm core}_G(N)\leqslant N<G$, we have ${\rm core}_G(N)=1$.
Thus $|G|$, and so $p$ divides $n!$, which forces $n\geqslant p$.
But $N$ contains a full Sylow $p$-subgroup of $G$ so $n$ is not
$p$, and thus $n\geqslant p+1$. Then $G$ contains at least $p+1$
conjugates of $U$.
\end{proof}

{\em Proof of Theorem \ref{th1-new}.}  Let $G$ be a finite simple
group with $\kappa(G)=\kappa(\mathbb{A}_5)$. First of all, we
recall, $\mathbb{A}_5\setminus \{1\}$ has $24$ elements of order
$5$, $20$ elements of order $3$, and $15$ elements of order $2$,
and hence $\omega(\mathbb{A}_5)=\{1, 2, 3, 5\}$. Thus, the power
graph $\mathcal{P}(\mathbb{A}_5)$ is the union of complete
subgraphs $K_5$, $K_3$ and $K_2$ with exactly one common vertex,
i.e., the identity element $1$, or equivalently
$$\mathcal{P}(\mathbb{A}_5)=K_1\vee (15K_1\oplus 10K_2\oplus 6K_4).$$
Now, by Corollary \ref{cor-epo}, we have
$$k(\mathbb{A}_5)=\kappa(K_2)^{15}\kappa(K_3)^{10}\kappa(K_5)^6=(2^0)^{15}(3^1)^{10}(5^3)^6=3^{10}5^{18}.$$
Therefore, $G$ is a finite simple group with
$\kappa(G)=3^{10}5^{18}$. Clearly, $G$ is non-abelian, since
otherwise $G\cong \mathbb{Z}_p$ for some prime $p$, and so
$\kappa(G)=\kappa(\mathbb{Z}_p)=p^{p-2}$, which is a
contradiction. Now, we claim that $\pi(G)=\{2, 3, 5\}$. Since
$\kappa(G)=3^{10}5^{18}<17^{15}$, from Corollary
\ref{cor-semidir-1}, it follows that  $$\pi(G)\subseteq
\pi(16!)=\{2, 3, 5, 7, 11, 13\}.$$ Suppose now that $p\in \pi(G)$
and $p\geqslant 7$. Let $s_p$ be the number of elements of order
$p$. Then, $s_p= c_p\phi(p)=c_p(p-1)$, where $c_p$ is the number
of cyclic subgroups of order $p$ in $G$. By Lemma \ref{marty},
$c_p\geqslant p+1$, because $G$ is a non-abelian simple group.
Therefore, from Theorem \ref{th-4} $(a)$, we deduce that
$$\kappa(G)\geqslant
\kappa(\mathbb{Z}_p)^{p+1}=p^{(p-2)(p+1)}\geqslant
7^{40}>3^{10}5^{18},$$ which is a contradiction. Our claim
follows.

By results collected in  \cite[Table 1]{zav}, $G$ is isomorphic
to one of the groups $\mathbb{A}_5$, $\mathbb{A}_6$ or $U_4(2)$.
In all cases, $G$ does not contain an element of order 15 (see
\cite{atlas}). Hence, a Sylow $3$-subgroup $G_3$ acts fixed point
freely on the set of elements of order 5 by conjugation, and so
$|G_3|$ divides $s_5=c_5\phi(5)=4c_5$. Now, if $|G_3|\geqslant
9$, then $c_5\geqslant 9$, and so
$$\kappa(G)>\kappa(\mathbb{Z}_5)^9=(5^3)^9>3^{10}5^{18},$$
which is a contradiction. Therefore, $|G_3|=3$, which forces
$G\cong \mathbb{A}_5$. This completes the proof. $\Box$

\begin{center} {\bf Acknowledgments}
\end{center} We sincerely thank the referee for an exceptionally careful
reading of the paper and for several valuable suggestions. We
would like to thank professor I. M. Isaacs for his proof of Lemma
\ref{marty}.

\subsection{Appendix} Using the obtained results in
previous sections we can compute the tree-numbers $\kappa(G)$ and
$\kappa(G^\#)$ of a finite group $G$, with $|G|\leqslant 15$.
These results are shown below in Table 1. In this table, $M$
denotes a non-abelian group of order 12 with the following
presentation:
$$M=\langle x, y \ | \ x^4=y^3=1, yx=xy^2\rangle.$$
\begin{center}
{\bf Table 1.} \ {\em The tree-numbers of small finite
groups}\\[0.3cm]
$
\begin{array}{llll}
\hline
n & \mbox{Group of order} \ n & \kappa(G) & \kappa(G^\#) \\
\hline 1 & 1 & 1 & - \\
2 & \mathbb{Z}_2 & 1 & 1 \\
3 & \mathbb{Z}_3 & 3 & 1 \\
4 & \mathbb{Z}_4 & 2^4 & 3 \\
& \mathbb{Z}_2\times \mathbb{Z}_2 & 1 & 0 \\
5 & \mathbb{Z}_5 & 5^3 & 2^4 \\
6 & \mathbb{Z}_6\cong \mathbb{Z}_3\times \mathbb{Z}_2 &  2^2\cdot
3^3\cdot 5 & 2^3\cdot 5 \\
& \mathbb{S}_3\cong \mathbb{Z}_3\rtimes \mathbb{Z}_2 & 3 & 0 \\
7 & \mathbb{Z}_7 & 7^5 & 2^4\cdot 3^4\\
8 & \mathbb{Z}_8 & 2^{18} & 7^5 \\
& \mathbb{Z}_4\times \mathbb{Z}_2 & 2^6\cdot 3 & 0 \\
& \mathbb{Z}_2\times \mathbb{Z}_2\times \mathbb{Z}_2 & 1 & 0
\\
& D_8 & 2^4 & 0 \\
& Q_8 & 2^{11} & 3^3 \\
9 & \mathbb{Z}_9 & 3^{14} & 2^{18} \\
& \mathbb{Z}_3\times \mathbb{Z}_3 & 3^4 & 0 \\
10 & \mathbb{Z}_{10}\cong\mathbb{Z}_{5}\times \mathbb{Z}_2  &
2^4\cdot 3^6\cdot 5^5 & 2^{11}\cdot 3^6 \\
& D_{10}\cong \mathbb{Z}_5\rtimes \mathbb{Z}_2 & 5^3 & 0 \\
11 & \mathbb{Z}_{11} & 11^9 & 2^8\cdot 5^8\\
12 & \mathbb{Z}_{12}\cong\mathbb{Z}_{4}\times \mathbb{Z}_3  &
2^{14}\cdot 3^6\cdot 5\cdot 131 &  2^{4}\cdot 3^2\cdot 7\cdot 11^3\cdot 173 \\
& \mathbb{Z}_6\times \mathbb{Z}_2 & 2^6\cdot 3^5\cdot 5^2\cdot 17 & 2^8\cdot 5^3 \\
& \mathbb{A}_4\cong(\mathbb{Z}_2\times \mathbb{Z}_2)\rtimes \mathbb{Z}_3 & 3^4 & 0 \\
& D_{12}\cong \mathbb{Z}_6\rtimes \mathbb{Z}_2 & 2^2\cdot
3^3\cdot 5 & 0 \\
& M & 2^{11}\cdot 3^2\cdot 5\cdot 7 & 2^3\cdot 3^3\cdot 5 \\
13 & \mathbb{Z}_{13} & 13^{11} & 2^{20}\cdot 3^{10} \\
14 & \mathbb{Z}_{14}\cong\mathbb{Z}_{7}\times \mathbb{Z}_2  & 2^6\cdot 7^7\cdot 13^5 &  2^{11}\cdot 3^6\cdot 13^5 \\
& D_{14}\cong \mathbb{Z}_7\rtimes \mathbb{Z}_2 & 7^5 & 0\\
15 & \mathbb{Z}_{15}\cong\mathbb{Z}_{5}\times \mathbb{Z}_3  &
3^{10}\cdot 5^{8}\cdot 11 \cdot 13^3  &
2^{17}\cdot 3^{3}\cdot 5\cdot 7^7 \\
\hline
\end{array}$
\end{center}

\end{document}